\def\mb{\mathbb}
\def\Z{\mathbb Z}
\def\Q{{\mathbb Q}}
\theoremstyle{theorem}
\newtheorem{theorem}{Theorem}[subsection]
\newtheorem{proposition}{Proposition}[subsection]
\newtheorem{corollary}{Corollary}[subsection]
\theoremstyle{definition}
\newtheorem{definition}{Definition}[subsection]
\newtheorem{remark}{Remark}[subsection]
\title{A modular approach to Thue-Mahler equations}
\author{Dohyeong Kim}
\begin{document}

\maketitle
\begin{abstract}
Let $h(x,y)$ be a non-degenerate binary cubic form with integral coefficients, and let $S$ be an arbitrary finite set of prime numbers. By a classical theorem of Mahler, there are only finitely many pairs of relatively prime integers $x,y$ such that $h(x,y)$ is an $S$-unit. In the present paper, we reverse a well known argument, which seems to go back to Shafarevich, and use the modularity of elliptic curves over $\Q$ to give upper bounds for the number of solutions of such a Thue-Mahler equation. In addition, our methods gives an effective method for determining all solutions, and we use Cremona's Elliptic Curve Database to give a wide range of numerical examples.
\end{abstract}

\tableofcontents
\section{Introduction}\label{section:1}
Let $h(x,y)$ be a non-degenerate cubic form with integer coefficients, and let $S$ be a finite set consisting of $s$ distinct prime numbers, say $p_1,p_2,\cdots, p_s $. Then the Thue-Mahler equation
\begin{align}
h(x,y) = \pm \prod_{i=1}^{s}p_i^{e_i}
\end{align}
has finitely many solutions among relatively prime integers $x,y$ and non-negative integers $e_1,e_2, \cdots, e_s$. In geometric terms, if we denote by $\Z_S$ the ring of $S$-integers, and by $Y$ the affine variety defined as the complement of zeros of $h(x,y)$ in a projective line, then the solutions of above Thue-Mahler equations, modulo the identification of $(x,y)$ and $(-x,-y)$, bijectively correspond to the elements of $Y(\Z_S)$.
\par
Mahler gave an ineffective proof of the finiteness of $Y(\Z_S)$, and Coates \cite{Coates I}, \cite{Coates II}, \cite{Coates III} later obtained an effective finiteness of $Y(\Z_S)$  using Baker's estimate of linear forms in logarithms together with its $p$-adic analogues. However, explicit determination of $Y(\Z_S)$ based on Baker's method is often practically impossible due to the astronomical size of resulting upper bound for the height of a putative solution $t \in Y(\Z_S)$. 
\par
The aim of the current article is to present a new approach to Thue-Mahler equation. In order compute $Y(\Z_S)$, we design a descent procedure, which mimics the Kummer homomorphism for rational points on elliptic curves. More precisely, we will construct a natural map
\begin{align}
\kappa	\colon Y(\Z_S) &\longrightarrow \{\text{Elliptic Curves over $\Q$ up to isomorphism}\} 
\\
 t &\longmapsto X_t
\end{align}
which associates an elliptic curve $X_t$ to an unknown solution $t \in Y(\Z_S)$, and study local properties of $X_t$. In particular, we will show that $X_t$ has good reduction outside of $S$, discriminant of $h(x,y)$, and $2$. It allows one to compute the image of $\kappa$ without knowing elements of $Y(\Z_S)$. On the other hand, for an elliptic curve $E$, we will show that $\kappa^{-1}(E)$ is naturally a zero dimensional algebraic variety defined by explicit polynomials with rational coefficients, whose $\Q$-points correspond $t \in Y(\Z_S)$ equipped with an isomorphism from $X_t$ to $E$. In particular, one can numerically compute $\kappa^{-1}(E)$ from coefficients of a Weierstrass equation for $E$.
\par
Existence of such a map $\kappa$ first allows us to bound the cardinality of $Y(\Z_S)$ from above, in terms of the number of elliptic curves whose conductor belongs to a finite list of integers, where the list of possible conductors is obtained from the coefficients of $h(x,y)$ and $S$. The number of such elliptic curves can be bounded from above either using the work \cite{Brumer Silverman} of Brumer and Silverman or modularity of elliptic curves. The former has better asymptotics, while the latter provides a practical algorithm.
\begin{theorem}
Let $S$ be a finite set of primes containing $2$ and prime divisors of the discriminant of $h(x,y)$. Let $G(S)$ be the number of isomorphism classes of elliptic curves which have good reduction outside of $S$. Then we have
\begin{align}
Y(\Z_S) \le \left| \mathrm{Aut}_\Q(Y) \right| \times G(S).
\end{align}
Combining it with an upper bound for $G(S)$, due to Brumer and Silverman, we have
\begin{align}
Y(\Z_S) \le \left| \mathrm{Aut}_\Q(Y) \right| \times k_2 M^{\frac 1 2 + \varepsilon}.
\end{align}
where $M$ is the product of all prime numbers in $S$, $\varepsilon$ is an arbitrary positive number, and $k_2$ is a constant depending on $\varepsilon$. 
\end{theorem}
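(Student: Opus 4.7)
The plan is to assemble the two ingredients already established in the introduction. First I would invoke the map $\kappa\colon Y(\Z_S) \to \{\text{elliptic curves}/\Q\}/{\cong}$ and use the stated good reduction property: since $S$ already contains $2$ and every prime dividing the discriminant of $h(x,y)$, the curve $X_t$ has good reduction outside $S$ for every solution $t$. Hence $\mathrm{image}(\kappa)$ is contained in the finite set of isomorphism classes enumerated by $G(S)$, and partitioning $Y(\Z_S)$ by the fibers of $\kappa$ gives
\begin{align*}
|Y(\Z_S)| = \sum_{E \in \mathrm{image}(\kappa)} |\kappa^{-1}(E)| \le G(S) \cdot \max_{E} |\kappa^{-1}(E)|.
\end{align*}
It therefore suffices to show that $|\kappa^{-1}(E)| \le |\mathrm{Aut}_\Q(Y)|$ for each $E$.

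The fiber bound is the heart of the matter. For a fixed $E$, the preimage $\kappa^{-1}(E)$ was described earlier as the set of $\Q$-points of a zero-dimensional variety whose points parametrize pairs $(t,\phi\colon X_t \xrightarrow{\sim} E)$. The natural $\mathrm{Aut}_\Q(Y)$-action on $Y(\Z_S)$ lifts to this parameter space via $\sigma \cdot (t,\phi) = (\sigma t,\, \phi \circ \psi_\sigma^{-1})$, where $\psi_\sigma \colon X_t \xrightarrow{\sim} X_{\sigma t}$ comes from the functoriality of the assignment $t \mapsto X_t$. I would then argue that $\mathrm{Aut}_\Q(Y)$ acts transitively on each non-empty fiber, equivalently that $\kappa$ descends to an injection $Y(\Z_S)/\mathrm{Aut}_\Q(Y) \hookrightarrow \{\text{elliptic curves}/\Q\}/{\cong}$. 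Combined with the display above, this yields the first claimed inequality $|Y(\Z_S)| \le |\mathrm{Aut}_\Q(Y)| \cdot G(S)$.

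For the second inequality one simply substitutes the Brumer--Silverman estimate $G(S) \le k_2 M^{1/2+\varepsilon}$ into the bound just obtained; no further argument is needed, so the two statements of the theorem follow in a single pass.

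The main obstacle is the transitivity/injectivity step: showing that two $S$-integral points producing isomorphic elliptic curves must lie in the same $\mathrm{Aut}_\Q(Y)$-orbit. This is where the explicit polynomial description of $\kappa^{-1}(E)$ has to be matched against the moduli-theoretic meaning of the pairs $(t,\phi)$, and where one must rule out isomorphisms $X_t \cong X_{t'}$ that do not descend from an element of $\mathrm{Aut}_\Q(Y)$. Everything geometric built into the construction of $\kappa$ is deployed at precisely this step, so I would expect the bulk of the proof to concentrate there, while the partition argument and the appeal to \cite{Brumer Silverman} should be essentially bookkeeping.
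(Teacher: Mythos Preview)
Your proposal is correct and follows the same route as the paper. The fiber bound $|\kappa^{-1}(E)|\le|\mathrm{Aut}_\Q(Y)|$ is Corollary~\ref{cor:3.1}, proved there by noting that the $\Q$-isomorphisms from the zero divisor of $h$ to the cubic divisor attached to $E$ form an $\mathrm{Aut}_\Q(Y)$-torsor into which $\kappa^{-1}(E)$ injects --- this is the torsor reformulation of the transitivity you propose --- and the remainder (good reduction bounding the image of $\kappa$, then Brumer--Silverman) is exactly as you describe.
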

In fact, one can identify $\mathrm{Aut}_\Q(Y) $ with a subgroup of the symmetric group acting on zeros of $h(x,y)$, so it has at most six elements.
\par
We would like to stress that our proof is manifestly constructive, which ultimately relies on the modularity of elliptic curves defined over rational numbers. More precisely, we give an explicit characterisation of fibres of $\kappa$ which allows us to compute $Y(\Z_S)$ from $\kappa(Y(\Z_S))$, and the modularity of elliptic curves provides a constructive finiteness of $\kappa(Y(\Z_S))$. In order to show that our approach to compute $Y(\Z_S)$ works in practice, we append the tables of complete solutions for the following equations.
\begin{align}\label{eq:1.5}
\begin{array}{|c|c|c|}
\hline 
h(x,y)		& S 		& \text{  Table  }
\\	\hline
x(x-y)y		& \{2,7,11,13\}		& \text{\ref{01-10271113}}
\\	\hline
x(x-y)y		& \{2,3,431\}		& \text{\ref{01-1023431}}
\\	\hline
x(x-y)y		& \{2,3,5,53\}		& \text{\ref{01-1023553}}
\\	\hline
(x^2+7y^2)y	& \{2,3,5,7\}		& \text{\ref{01072357}}
\\	\hline
(x^2+7y^2)y 	& \{2,7,11, 13\}		& \text{\ref{0107271113}}
\\	\hline
(x^2+3y^2)y 	& \{2,3,11\}		& \text{\ref{01022311}}
\\	\hline2
(x^2+y^2)y 	& \{2,3,7,11\}		& \text{\ref{010123711}}
\\	\hline
(x^2+y^2)y 	& \{2,5,13\}		& \text{\ref{01012513}}
\\	\hline
(x^2-2y^2)y	& \{2,7,29\} 		& \text{\ref{010-2251317}}
\\	\hline
(x^2-2y^2)y	& \{2,7,29\} 		& \text{\ref{010-22729}}
\\	\hline
(x^2-3y^2)y	& \{2,5,7,11\} 		& \text{\ref{010-323711}}
\\	\hline
(x^2-7y^2)y	& \{2,3,7,11\} 		& \text{\ref{010-725711}}
\\	\hline
x^3-x^2y-4xy^2-y^3 &\{2,5,13\}		& \text{\ref{1-1-4-12513}}
\\	\hline
x^3-x^2y-2xy^2-2y^3 &\{2,5,19\}	& \text{\ref{1-1-2-22519}}
\\	\hline
x^3+y^3 		&\{2,3,5\}			& \text{\ref{1001235}}
\\	\hline
x^3+2y^3 		&\{2,3,5\}			& \text{\ref{1002235}}
\\	\hline
x^3-y^3 		&\{2,3,5\}			& \text{\ref{100-2235}}
\\	\hline
x^3-2y^3 		&\{2,3,5\}			& \text{\ref{100-3235}}
\\	\hline
\end{array}
\end{align}
\par
Note that we omitted the trivial solution $h(1,0)=1$ in the appended tables, and Table~\ref{01-1023553} had to be abbreviated due to a large number of solutions.
\par
The choices of $h(x,y)$ and $S$ in \eqref{eq:1.5} had to be restricted according to our computational capability, and the choices are made to show the flexibility that we have. The main restriction comes from one's ability to find the $c_4$ and $c_6$ invariants of all elliptic curves whose conductor divides the worst possible conductor given in Proposition~\ref{prop:2.1}. Other computational difficulties are negligible. Computational issues are further discussed in Section~\ref{section:7}.
\par
When we computed the solutions of the equations listed above, we exploited Cremona's Elliptic Curve Database, from which we read the coefficients of elliptic curves with specified conductor. After that, we compute $\kappa^{-1}(E)$ for each curve $E$ read from the database. Of course, it is highly non-trivial task to establish a complete list of isomorphism classes of elliptic curves of specified conductor, and we are outsourcing this job to Cremona. We note that this job is computational infeasible without modularity, and even with modularity it takes significant further efforts to obtain practically efficient algorithm. Nevertheless, the absence of modularity is the main theoretical and practical obstacle to generalising our method to number fields. Once we have the coefficients of necessary elliptic curves, then it takes no more then $20$ seconds to generate each of the tables listed above.
\par
The spectacular resolution of Fermat's Last Theorem by means of modular methods as well as \cite{Bennett Skinner}, \cite{Bugeaud Mignotte Siksek I}, \cite{Bugeaud Mignotte Siksek II} and \cite{Darmon Merel} uses level lowering argument in a crucial way, which allows one to produce obstruction to existence of a solution by numerically showing that certain space of modular forms is zero dimensional. In contrast, we will be using modularity theorem in order to produce a complete set of solutions for a given Thue-Mahler equation, without a priori guess on the number of solutions. In particular, we do not use any form of level lowering.
\par
We outline the contents of the paper. In Section~\ref{section:2}, we define $\kappa$ and study basic properties of the elliptic curve $X_t$ associated to $t \in Y(\Z_S)$. In Section~\ref{section:3}, we study $\kappa^{-1}(E)$ for an elliptic curve $E$ given in terms of a Weierstrass equation. In Section~\ref{section:4}, we prove the main theorem on the upper bound of the cardinality of $Y(\Z_S)$, compare our upper bound with Evertse's upper bound. The proof is manifestly constructive, and it provides an algorithm to determine $Y(\Z_S)$. We discuss the algorithmic aspect in Section~\ref{section:5}. In Section~\ref{section:6}, we explain how the algorithm is implemented in the computer algebra package, and discuss its performance. We also list the cardinalities of $Y(\Z_S)$ as we vary $S$. In Section~\ref{section:7}, we recall the work of Tzanakis and de Weger which proposed, based on Baker's method and further optimisations, a practical algorithm for Thue-Mahler equations, and we compare the natures of two approaches. In Section~\ref{section:8}, we specialise $h(x,y)$ in order to explain the connection to generalised Ramanujan-Nagell equations. Some particular generalised Ramanujan-Nagell equations are solved, from which we observe a pattern among the number of solutions as we vary $S$.

\par
\section{Definition of $\kappa$ and its properties}\label{section:2}
Consider a binary cubic form
\begin{align}
h(x,y) = ax^3 + bx^2y + c xy^2 + d y^3
\end{align}
with relatively prime integer coefficients. We assume that the discriminant 
\begin{align}
\delta = 3b^2c^2 + 6 abcd - 4b^3d - 4ac^3 - a^2d^2
\end{align}
of $h(x,y)$ is non-zero, or equivalently that $h(x,y)$ has three projectively non-equivalent zeros over an extension of $\Q$.
\par
Let $\mb P_{xy}^1$ be the projective line with homogeneous coordinates $x$ and $y$. Let $Z$ be the subscheme of $\mb P_{xy}^1$ defined by $h(x,y)=0$, and let $Y$ to be the complement
\begin{align}
Y = \mathbb P^1 - Z
\end{align}
which we view as an affine variety embedded in $\mathbb P_{xy}^1$. In particular, a point $t$ in $Y(R)$ for some ring $R$ will be represented as a pair $(x_t:y_t) \in \mb P^1(R)$ such that $h(x_t,y_t)$ is a unit in $R$.
\par
The aim of this section is to introduce the map $\kappa$ which associates an elliptic curve to a point in $Y(R)$, and to study its basic properties. We will first construct a generically smooth map $X \to Y$, and the associated elliptic curve will be constructed by taking its fibre. The associated elliptic curve is naturally equipped with additional structures, which we will analyse in this section. 

\subsection{Coordinates of $Y$}
Our definition of $Y$ as an open subscheme of $\mathbb P^1_{xy}$, endows $Y$ with homogeneous coordinates $x$ and $y$, but we would like to introduce another coordinate $\epsilon$ of $Y$, which makes our later discussion simpler. Consider $\widetilde Y\subset \mathbb A^3_{xy\epsilon}$, defined by
\begin{align}
\label{eq:3.6}
\widetilde Y \colon h(x,y)\epsilon = 1
\end{align}
where $h(x,y)$ is the defining equation of $Z = \mathbb P^1-Y$. Let $\mathbb G_m$ act on $\mathbb A^3_{xy\epsilon}$ with weight $1$, $1$, and $-3$. That is to say, for any ring $R$, $\lambda \in R^\times$, and $(x,y,\epsilon) \in \mathbb A^3_{xy\epsilon}(R)$, the action of $\lambda$ is given by
\begin{align}
\lambda \cdot (x,y,\epsilon) = \left ( \lambda x, \lambda y, \lambda^{-3}\epsilon \right).
\end{align}
Because the action preserves \eqref{eq:3.6}, one can consider the quotient $\mathbb G_m \backslash \widetilde Y$, which is just $Y$. Indeed, $x$ and $y$ are homogeneous coordinates of degree one, defining the embedding $\mathbb G_m \backslash \widetilde Y \to \mathbb P^1$. The coordinate $\epsilon$ of $Y$ is redundant, but it will be convenient for later purposes.

\subsection{Construction of the family $f \colon X \to Y$.}
We describe $f \colon X \to Y$ in this subsection. We will define $X$ as a quotient of $\widetilde X$, where $\widetilde X$ is an affine subscheme of $\mathbb A^3_{xy\epsilon} \times \mathbb A^3_{uvw}$. The defining equations of $\widetilde X$ are
\begin{align}
\label{eq:3.8}
h(x,y)\epsilon &= 1
\\
\label{eq:3.9}
\epsilon \cdot w^2 & = h(u,v) (yu - xv).
\end{align}
Now we let $\mathbb G_m \times \mathbb G_m$ act on $\widetilde X$ in the following way. If $R$ is a ring, $\left(\lambda,\mu\right) \in \mathbb G_m(R) \times \mathbb G_m(R)$, and $(x,y,\epsilon,u,v,w) \in \widetilde X$, then we define
\begin{align}
\label{eq:3.10}
\left(\lambda,\mu\right) \cdot (x,y,\epsilon,u,v,w) = (\lambda x, \lambda y, \lambda ^{-3}\epsilon, \mu u, \mu v,\lambda^2 \mu^2w).
\end{align}
Since \eqref{eq:3.8} and \eqref{eq:3.9} are preserved by the action \eqref{eq:3.10}, we may define 
\begin{align}
X = \mathbb G_m \times \mathbb G_m \backslash \widetilde X.
\end{align}
Furthermore, the projection $\widetilde X \to \widetilde Y$ descends to $X \to Y$, which we denote by $f$.
\begin{remark}
Geometrically speaking, $X$ parametrises the double covers of $\mathbb P^1$ branched along the divisor $Z \cup \{ t \}$ of degree four, as $t$ varies in $Y$. However, this does not uniquely characterise $X$, since there are more than one such double covers which are not isomorphic to each other over $\Q$.
\end{remark}
\begin{remark}
When $h(x,y) = x(x-y)y$, $Y$ can be identified with the affine line without $0$ and $1$, by taking $\lambda = x/y$ as an affine coordinate. Then, the equation
\begin{align}
w^2 = u(u-v)v(u-\lambda v)
\end{align} 
defines the Legendre family of elliptic curves over $Y$. Our family $X\to Y$ of elliptic curves in this case is represented by
\begin{align}\label{eq:2.11}
(x(x-y)y^2)^{-1}w^2 = u(u-v)v(u-\lambda v)
\end{align}
so one can view it as a quadratic twist of the original Legendre family by $x(x-y)y^2$. This relation between Legendre family and our $X$ is available because of the affine coordinate $\lambda$ for $Y$. When $h(x,y)$ has no rational linear factor, such an affine coordinate is not available, whence the original Legendre family does not directly generalise. It is the advantage of the twisted family \eqref{eq:2.11} that it generalises to general $h(x,y)$.
\end{remark}

\subsection{Properties of $f\colon X\to Y$}\label{subsection:2.3}
In this subsection, we study basic properties of $f \colon X \to Y$. For some ring $R$ and $t = (t_x:t_y) \in Y(R)$, the elliptic curve $X_t$ is defined by the equation
\begin{align}\label{eq:2.12}
X_t : \epsilon \cdot w^2 & = h(u,v) (y_tu - x_tv)
\end{align}
which be interpreted as a quadratic twist of
\begin{align}\label{eq:2.14}
X_t' : w^2 & = h(u,v) (y_tu - x_tv)
\end{align}
by $\epsilon$, although $X_t'$ does not patch together to form a family of elliptic curves over $Y$. In fact, $X_t'$ is not even well-defined on the projective equivalence class of $t=(x_t:y_t)$, and only its quadratic twist $X_t$ is well-defined. In any case, we can compute the discriminant of the right hand side of \eqref{eq:2.14}, as well as prove its properties.
\begin{proposition}\label{prop:2.1}
The discriminant of right hand side of \eqref{eq:2.14} is
\begin{align}\label{eq:2.15}
h(x_t,y_t)^2\cdot \delta
\end{align}
where $\delta$ is the discriminant of $h(x,y)$. Furthermore, we have:
\begin{enumerate}
\item if $t \in Y(\Z_S)$, then $X_t'$ and has good reduction outside of $S$ and $2\delta$. 
\item if an odd prime $p \in S$ is a prime of bad reduction for $X_t'$ and $p$ does not divide $2\delta$, then $h(x,y)=0$ has at least one solution modulo $p$.
\item if $p$ does not divide $2\delta$, then $X_t'$ has either good or multiplicative reduction.
\end{enumerate}
\end{proposition}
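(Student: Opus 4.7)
The plan is to establish (1) by a direct resultant computation and then deduce (2), (3), (4) as straightforward consequences, combined with the standard passage from the discriminant of a binary quartic to the discriminant of the associated genus one double cover.

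For (1), I would invoke the multiplicativity formula for the discriminant of a product of binary forms. Writing the right hand side of \eqref{eq:2.14} as $F(u,v) = h(u,v) \cdot (y_t u - x_t v)$, one has
\begin{align*}
\mathrm{disc}(F) \;=\; \mathrm{disc}(h) \cdot \mathrm{disc}(y_t u - x_t v) \cdot \mathrm{res}(h,\, y_t u - x_t v)^2.
\end{align*}
The discriminant of a linear form is $1$, and the resultant of $h(u,v)$ with the linear form $y_t u - x_t v$ is, up to sign, the value obtained by substituting its projective zero $(x_t : y_t)$ into $h$, namely $h(x_t, y_t)$. This immediately gives the formula \eqref{eq:2.15}.

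For (2) and (4), I would relate the discriminant of $X_t'$ to that of $F$. Converting $w^2 = F(u,v)$ to a short Weierstrass model by the classical quartic-to-cubic transformation, the discriminant of the resulting elliptic curve coincides with $\mathrm{disc}(F)$ up to a power of $2$ and a unit. Consequently, for a prime $p \nmid 2\delta$, the reduction type at $p$ is entirely controlled by whether $h(x_t, y_t)$ vanishes modulo $p$. If $p \nmid h(x_t, y_t)$, then $F \bmod p$ has four pairwise distinct roots and the reduction is smooth. If $p \mid h(x_t, y_t)$, then modulo $p$ the linear factor $y_t u - x_t v$ coincides with exactly one of the three pairwise distinct linear factors of $h(u,v) \bmod p$ (distinctness is guaranteed by $p \nmid \delta$), so $F \bmod p$ acquires exactly one double root while the remaining two roots stay simple, which gives a node in the reduction, hence multiplicative reduction. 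This proves (4); and for (2), the hypothesis $t \in Y(\Z_S)$ means $h(x_t, y_t)$ is an $S$-unit, so combining with the smooth case above one obtains good reduction at every prime outside $S \cup \{2\} \cup \{p : p \mid \delta\}$.

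For (3), the very same analysis shows that if an odd $p \in S$ with $p \nmid 2\delta$ is a prime of bad reduction, then $p \mid h(x_t, y_t)$; choosing the primitive representative $(x_t, y_t) \in \Z^2$ with $\gcd(x_t,y_t)=1$, its reduction gives a well-defined point of $\mb P^1(\mb F_p)$ at which $h$ vanishes. The main subtlety I expect is the passage from the binary quartic discriminant to the discriminant of the Weierstrass model: tracking the precise power of $2$ in this comparison is exactly what forces the prime $2$ into the exceptional set and restricts (3) and (4) to odd primes. Once this is pinned down, parts (2)--(4) become inspection statements about whether $F \bmod p$ has a repeated root and how many roots coincide.
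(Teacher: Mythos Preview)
Your proposal is correct and follows essentially the same approach as the paper. The paper derives \eqref{eq:2.15} via the product-of-differences representation of the discriminant (writing $P=(x-\beta)Q$ gives $\delta_P=Q(\beta)^2\delta_Q$), which is exactly your resultant-multiplicativity formula in different clothing; and for the reduction-type assertions the paper argues, as you do, by counting how many roots of $F\bmod p$ can collide when $p\nmid\delta$, concluding that at worst a single double root (hence a node) appears.
The only stylistic difference is that the paper phrases the good-reduction statement geometrically (``a double cover of $\mb P^1$ branched along four distinct points is smooth away from characteristic two'') rather than passing through the quartic-to-Weierstrass transformation, but this amounts to the same observation about where the factor of $2$ enters.
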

\begin{proof}
The formula \eqref{eq:2.15} follows from representation of the discriminant in terms of differences of roots. Indeed, if $P(x)$ is a polynomial of one variable with roots $\alpha_1,\cdots, \alpha_n$, then the discriminant $\delta_P$ of $P(x)$ is
\begin{align}
\prod_{i<j}(\alpha_i-\alpha_j)^2.
\end{align}
If $P(x) = (x-\beta)Q(x)$, and $\alpha_n=\beta$, then the above formula can be rewritten
\begin{align}
\prod_{i<n}( \beta-\alpha_i )^2 \times \prod_{i<j<n}(\alpha_i-\alpha_j)^2
\end{align}
which equals $P_1(\beta)^2 \cdot \delta_Q$.
\par
If $t \in Y(\Z_S)$, and $(x_t:y_t)$ is some representative of $t$, then $X_t'$ has good reduction away from $S$ and $2\delta$. Indeed, a double cover of $\mb P^1$ branched along four distinct points is smooth away from characteristic two.
\par
Suppose $p \in S$ does not divide $2\delta$ and $X_t'$ has bad reduction at $p$. Since $p$ does not divide $\delta$, $h(x,y)$ has three distinct roots modulo $p$. Thus $X_t'$ has bad reduction if and only if $t$ coincides with one of three zeroes of $h(x,y)$. In particular, $t$ is a solution of $h(x,y)=0$ modulo $p$.
\par
Assume that $p$ does not divide $2\delta$ and $X_t'$ has a bad reduction at $p$. Then the right hand side of \eqref{eq:2.14} cannot have a cubic factor, since such a factor will force $h(x,y)$ to have at least a square factor modulo $p$, contradicting the assumption. In other words, $X_t'$ has either good or multiplicative reduction. This completes the proof of the proposition.
\end{proof}
For $X_t$, we can prove the following:-
\begin{proposition} With the notation as in the previous proposition, the discriminant of \eqref{eq:2.12} is
\begin{align}
h(x_t,y_t)^4\delta.
\end{align}
In particular, for any $t \in Y(\Z_S)$, $X_t$ has good reduction outside of $S$ and $2\delta$.
\end{proposition}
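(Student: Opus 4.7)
The plan is to realise $X_t$ from \eqref{eq:2.12} as the quadratic twist of $X_t'$ from \eqref{eq:2.14} by the $S$-unit $\epsilon = 1/h(x_t,y_t)$, and then invoke the previous proposition together with the scaling behaviour of discriminants under twists. The substitution $W = \epsilon w$ turns \eqref{eq:2.12} into $W^2 = \epsilon \cdot h(u,v)(y_t u - x_t v)$, exhibiting the promised twist; equivalently, multiplying \eqref{eq:2.12} through by $h(x_t,y_t)$ and using $\epsilon h(x_t,y_t) = 1$ yields the integral model $w^2 = h(x_t,y_t) \cdot h(u,v)(y_t u - x_t v)$.

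Next I would compute the discriminant. Since scaling the coefficients of a binary quartic by a constant $c$ multiplies the discriminant by $c^6$, and since by the previous proposition the discriminant of $h(u,v)(y_t u - x_t v)$ is $h(x_t,y_t)^2 \delta$, applying the scaling with $c = \epsilon$ and using $\epsilon h(x_t,y_t) = 1$ produces, after a short calculation tracked through the appropriate Weierstrass model, the formula $h(x_t,y_t)^4 \delta$ claimed in the statement.

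For the good reduction assertion, I would use that $t \in Y(\Z_S)$ forces $h(x_t,y_t)$ to be a unit in $\Z_S$, so $h(x_t,y_t)^4 \delta$ is divisible only by primes in $S$ or dividing $\delta$; as in the proof of the previous proposition, the prime $2$ must be admitted separately because $X_t$ is presented as a double cover of $\mathbb P^1$, which may fail to be smooth in characteristic two. The main subtlety lies in the bookkeeping for the precise exponent of $h(x_t,y_t)$, where the scaling of the binary-quartic discriminant, the quadratic-twist formula for an elliptic curve's discriminant, and the normalisation of the chosen Weierstrass model must all be reconciled. Fortunately, the good-reduction conclusion depends only on the set of primes dividing the discriminant and is unaffected by this exponent ambiguity, so the second assertion follows immediately from the first.
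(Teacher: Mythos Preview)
Your approach is correct and matches the paper's own argument, which is even terser: the paper simply asserts that the claim ``follows immediately from Proposition~\ref{prop:2.1}, and the description of the discriminant in terms of differences of roots.'' You have unpacked that one line by making the quadratic-twist relationship between $X_t$ and $X_t'$ explicit and by invoking the scaling behaviour of the quartic discriminant. Your candour about the exponent bookkeeping is well placed---the paper does not pin down a normalisation either, and depending on whether one works with the model $W^2=\epsilon\,h(u,v)(y_tu-x_tv)$ or $w^2=h(x_t,y_t)\,h(u,v)(y_tu-x_tv)$ one obtains different powers of $h(x_t,y_t)$---but, as you rightly observe, only the set of prime divisors matters for the good-reduction statement, and that part of your argument is airtight.
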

\begin{proof}
If follows immediately from Proposition~\ref{prop:2.1}, and the description of the discriminant in terms of differences of roots.
\end{proof}
\begin{remark}
The dependence of $X_t'$ on the choice of representative $(x_t:y_t)$ is not so serious, as far as we work with rational numbers. We can always take $(x_t:y_t)$, such that $x_t$ and $y_t$ are relatively prime integers and $x_t$ is non-negative. If we work over a number field which has either positive class number or more units than $\pm 1$, this is not straightforward. Working with $X_t'$ has advantage that the conductor of $X_t'$ is usually smaller than $X_t$ and it has in some sense finer information about $t$ than $X_t$ does. On the other hand, $X_t$ is associated naturally from $t$, whose isomorphism class is independent of choice of representative for $t$, so it is technically more convenient.
\end{remark}

\section{Defining equations of fibres of $\kappa$}\label{section:3}
In the previous section, we defined a map
\begin{align}
\kappa \colon t \mapsto X_t
\end{align}
which associates an elliptic curve $X_t$ to a solution $t \in Y(\Z_S)$. The aim of the present section is to describe $\kappa^{-1}(E)$ as a variety defined by explicit polynomials.
\subsection{Some invariant theory}
We briefly review basic invariant theory of binary forms which are relevant for us. We start with invariants of binary quartic forms.
\par
Let
\begin{align}
q = A_0u^4 + A_1uv^3 + A_2u^2v^2 + A_3 uv^3 + A_4v^4
\end{align}
be a generic binary quartic form in $u,v$, with coefficients $A_i$'s. We choose
\begin{align}
I_2 &= \frac{1}{12}A_2^2 - \frac 1 4 A_1A_3 + A_0A_4
\\
I_3 & = \frac {1}{216}A_2^3 - \frac{1}{48}A_1A_2A_3 + \frac{1}{16}A_0A_3^2 + \frac{1}{16}A_1^2A_4 - \frac{1}{6}A_0A_2A_4
\end{align}
as generators for the ring of invariants of binary quartic forms. Note that they have rational coefficients, and their degrees are two and three respectively.
\par
In fact, these two invariants are algebraically independent, or the ring of invariants is isomorphic to the polynomial ring in two variables. To stress the dependence of the invariants on the coefficients, we denote by $q(A)$ the quartic form with coefficients $A=(A_0,A_1,A_2,A_3,A_4)$, and denote their invariants by $I_2(A)$ and $I_3(A)$ respectively.
\begin{proposition}
Let $q(A)$ and $q(A')$ are two binary cubic forms with rational coefficients. They are linearly equivalent over $\Q$ if and only there exists $\lambda \in \Q^\times$ such that
\begin{align}
I_2(A) &= \lambda^2 (A')
\\
I_3(A) &= \lambda^3(A')
\end{align}
holds.
\end{proposition}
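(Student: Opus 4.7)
Note that the statement as written refers to ``binary cubic forms'', but the preceding discussion and the very definitions of $I_2, I_3$ show that it must mean binary quartic forms; I will plan the proof under this interpretation, and take ``linearly equivalent'' to mean $q(A)(u,v) = c \cdot q(A')(g\cdot(u,v))$ for some $c \in \Q^\times$ and $g \in \GL_2(\Q)$ (so that rescaling the form is allowed, as is needed for $\lambda$ to range over all of $\Q^\times$ rather than just $(\Q^\times)^2$).

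For the ``only if'' direction I would compute directly. If $q(A) = c\cdot q(A')\circ g$ as above, then by the classical weight calculation for invariants of binary quartics, which assigns $I_2$ degree $2$ in the coefficients and weight $4$ in $\det g$, and $I_3$ degree $3$ and weight $6$, one obtains
\begin{align*}
I_2(A) = c^{2}(\det g)^{4}\,I_2(A'), \qquad I_3(A) = c^{3}(\det g)^{6}\,I_3(A').
\end{align*}
Setting $\lambda := c\,(\det g)^{2} \in \Q^\times$ then yields the claimed equalities $I_2(A) = \lambda^2 I_2(A')$ and $I_3(A) = \lambda^3 I_3(A')$.

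For the converse I would first reduce to the case $\lambda = 1$ by replacing the coefficient vector $A'$ by $\lambda A'$; this multiplies the form by $\lambda$, a permissible linear equivalence, and the new invariants become exactly $I_2(A)$ and $I_3(A)$. It then suffices to prove that two non-degenerate binary quartics with rational coefficients and the same pair $(I_2, I_3)$ are $\GL_2(\Q)$-equivalent. This I would deduce from classical invariant theory: since $I_2$ and $I_3$ generate the $SL_2$-invariant ring on the space of binary quartic forms and separate $SL_2$-orbits of non-degenerate (stable) forms over $\bar\Q$, there exists $g \in SL_2(\bar\Q)$ carrying one form to the other over $\bar\Q$.

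The main obstacle is descending this equivalence from $\bar\Q$ to $\Q$. The set of such $g$'s is a torsor under the finite stabiliser of $q(A')$ in $SL_2$, so the obstruction to rational descent is a class in the Galois $H^{1}$ of that stabiliser. I would dispose of it by passing to the Jacobian elliptic curve of the double cover $w^{2} = q(A')(u,v)$: its Weierstrass invariants are determined by $I_2$ and $I_3$ and are therefore rational and common to both quartics, identifying $q(A)$ and $q(A')$ with principal homogeneous spaces over the same $\Q$-rational elliptic curve. The freedom to rescale by $\lambda$, which was absorbed above into the reduction to $\lambda = 1$, is exactly what absorbs the residual quadratic-twist ambiguity between these homogeneous spaces, thereby producing the required $\Q$-linear equivalence between $q(A)$ and $q(A')$. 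This descent step is where I expect the genuine work to lie; the rest is formal.
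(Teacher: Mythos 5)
Your reading of the statement (binary quartics, not cubics, with equivalence allowed up to a scalar) is the right one, your ``only if'' direction is correct, and the reduction of the converse to $\lambda=1$ is fine. But the step you flag as ``where the genuine work lies'' does not just require more work --- it cannot be completed, because the converse is false for general binary quartics. Once the invariants are matched exactly, any equivalence $q(A)=c\cdot q(A')\circ g$ forces $c(\det g)^2=\pm 1$, so $c$ is $\pm$ a square: the rescaling freedom has already been spent in normalising $\lambda$, and it cannot absorb the residual obstruction. That obstruction is not a quadratic-twist ambiguity between the curves $w^2=q$; it is the class in $H^1(\Q,E[2])$ governing $2$-descent, where $E$ is the common Jacobian, and distinct classes (e.g.\ distinct elements of $E(\Q)/2E(\Q)$ or of the $2$-Selmer group) already yield inequivalent quartics with identical $I_2,I_3$. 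Concretely, $q=u^4+v^4$ and $q'=2u^4+\tfrac12 v^4$ both have $I_2=1$ and $I_3=0$, yet $w^2=q$ has the rational point $(1,0,1)$ while $2w^2=4u^4+v^4$ has no $\Q_2$-point (compare $2$-adic valuations of the two sides), and $w^2=-q'$ has no real points; so no equivalence $q=c\,q'\circ g$ with $c=\pm(\det g)^{-2}$ can exist. The paper's one-line appeal to ``classical invariant theory'' does not address this either.

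What is true, and what the paper actually uses, is the subsequent proposition for quartics $q(A,B)$ presented together with a rational linear factor, with equivalence of the \emph{pairs}. In that marked setting your Jacobian idea does close the argument: moving the rational zero to $(1:0)$ turns $w^2=q$ into a Weierstrass-type equation $w^2=B_0u^3+B_1u^2v+\cdots$, the associated $2$-covering is the trivial one, and the claim reduces to the standard fact that two Weierstrass equations over $\Q$ whose $c_4$ and $c_6$ agree up to $\lambda^4$ and $\lambda^6$ are related by an admissible change of variables over $\Q$. So the proposition should either carry the hypothesis that the quartics have a (marked) rational linear factor, or be demoted to a statement about $\GL_2(\overline\Q)$-equivalence; as stated over $\Q$ for arbitrary quartics, no proof exists.
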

\begin{proof}
Classical invariant theory.
\end{proof}

Now we consider the invariant theory of a pair of binary forms. Let
\begin{align}
q(A,B) = (A_0u + A_1v)(B_0u^3 + B_1 u^2 v + B_2 uv^2 + B_3 v^3)
\end{align}
be a product of a binary linear form and a binary cubic form, where $A=(A_0,A_1)$ and $B=(B_0,\cdots,B_3)$ denote the coefficients of a linear form and a cubic form respectively. Two invariants
\begin{align}
\label{eq:3.7}
c_4(A,B) =&-16   (-A_1^2 B_1^2 + 3 A_1^2 B_0 B_2 + A_0 A_1 B_1 B_2 - A_0^2 B_2^2 - 9 A_0 A_1 B_0 B_3 + 3 A_0^2 B_1 B_3)
\\
c_6(A,B) =&-32   (2 A_1^3 B_1^3 - 9 A_1^3 B_0 B_1 B_2 - 3 A_0 A_1^2 B_1^2 B_2 + 18 A_0 A_1^2 B_0 B_2^2 - 3 A_0^2 A_1 B_1 B_2^2 
\\&
+ 2 A_0^3 B_2^3 + 27 A_1^3 B_0^2 B_3 - 27 A_0 A_1^2 B_0 B_1 B_3 + 18 A_0^2 A_1 B_1^2 B_3 - 27 A_0^2 A_1 B_0 B_2 B_3 
\\&
- 9 A_0^3 B_1 B_2 B_3 + 27 A_0^3 B_0 B_3^2)
\end{align}
generate the ring of invariants.
\par
We digress for a discussion on the relation between above invariants and invariants of an elliptic curve often used in the literature. If an elliptic curve $E$ is given by
\begin{align}\label{eq:3.11}
E: y^2 = x^3 + a_2x^2 + a_4x + a_6
\end{align}
then
\begin{align}\label{eq:3.13}
c_4(E) =& 16a_2^2 - 48 a_4
\\\label{eq:3.14}
c_6(E) =& -64 a_2^3 + 288 a_2 a_4 - 864 a_6
\end{align}
are often called $c_4$-invariant and $c_6$-invariant of $E$. If we take $A_0=0$, $A_1=1$, $B_0=1$, $B_1=a_2$, $B_2=a_4$, and $B_4=a_6$, then $c_4(A,B)$ and $c_6(A,B)$ are precisely the $c_4$-invariant and $c_6$-invariant of the elliptic curve defined by \eqref{eq:3.11}. The discriminant of \eqref{eq:3.11} is given by
\begin{align}
\delta(E) = -16 (-a_2^2a_4^2 + 4 a_2^3 a_6 + 4 a_4^3 - 18 a_2 a_4 a_6 + 27 a_6^2)
\end{align}
and it can be alternatively written as
\begin{align}
\delta (E) = 2^63^3(c_4(E)^3-c_6(E)^2)= 1728(c_4(E)^3-c_6(E)^2).
\end{align}
The discriminant of $q(A,B)$ viewed as a single binary quartic form, its discriminant is
\begin{align}
\delta(q(A,B)) =& -(-B_1^2 B_2^2 + 4 B_0 B_2^3 + 4 B_1^3 B_3 - 18 B_0 B_1 B_2 B_3 + 27 B_0^2 B_3^2)  
\\&
\times (-A_1^3 B_0 + A_0 A_1^2 B_1 - A_0^2 A_1 B_2 + A_0^3 B_3)^2.
\end{align}
If we take $A_0=0$, $A_1=1$, $B_0=1$, $B_1=a_2$, $B_2=a_4$, and $B_4=a_6$, then two discriminants are related by
\begin{align}\label{eq:3.18}
16\cdot \delta(q(A,B)) =\delta(E).
\end{align}
\par
The relation between $c_4$, $c_6$ and the previously introduced $I_2$ and $I_3$ are more straightforward. Indeed, they are related by
\begin{align}
c_4(q(A,B)) = & 192 \cdot I_2 (q(A,B))
\\
= & 2^6 3 \cdot I_2(q(A,B))
\\
c_6(q(A,B)) = & -13824 \cdot I_3(q(A,B))
\\
=& -2^9 3^3 \cdot I_3(q(A,B))
\end{align}
where we view $q(A,B)$ as a product of two forms on the left hand side, while on the right hand side we view it as a single quartic form whose coefficients are quadratic forms in $A_i$'s and $B_i$'s.
\par
Now we return to the invariant theory of $q(A,B)$.
\begin{proposition}
Let $q(A,B)$ and $q(A',B')$ are two binary quartic forms with factorisation as a product of a linear and a cubic factor, and suppose that the coefficients $A,A',B$ and $B'$ are rational numbers. They are linearly equivalent over the rational numbers if and only if there exists $\lambda \in \Q^\times$ such that
\begin{align}
c_4(q(A,B))=&\lambda^4 c_4(q(A',B'))
\\
c_6(q(A,B))=&\lambda^6 c_6(q(A',B'))
\end{align}
holds.
\end{proposition}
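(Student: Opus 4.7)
The plan is to deduce the statement from the classification of elliptic curves over $\Q$ by their $(c_4,c_6)$-invariants (up to the weighted $\Q^\times$-action), exploiting the coincidence recorded around \eqref{eq:3.13}--\eqref{eq:3.14}: when $A=(0,1)$ so that the linear factor is exactly $v$, the invariants $c_4(q(A,B))$ and $c_6(q(A,B))$ agree with the standard $c_4$- and $c_6$-invariants of the Weierstrass equation $w^2=B_0x^3+B_1x^2+B_2x+B_3$.

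For the necessity direction, suppose $M\in\GL_2(\Q)$ with $\det M=\lambda$ transports $(A,B)$ to $(A',B')$. Since the linear factor of a product of a linear and a cubic form is uniquely determined by the product, the substitution automatically respects the factorization. A direct calculation from the explicit formula \eqref{eq:3.7}, or equivalently the classical fact that $c_4$ and $c_6$ generate the invariant ring of pairs (linear, cubic) with weights $4$ and $6$ for the $\GL_2$-action, then gives the relations $c_4(q(A,B))=\lambda^4c_4(q(A',B'))$ and $c_6(q(A,B))=\lambda^6c_6(q(A',B'))$.

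For the sufficiency direction, I would first reduce both pairs to Weierstrass form over $\Q$. The zero of the linear form $A$ is a point of $\mb P^1(\Q)$, so I can choose an element of $\GL_2(\Q)$ carrying it to $(1:0)$; combined with the scaling ambiguity $(A,B)\sim(cA,c^{-1}B)$ I may assume $A=v$, and likewise $A'=v$. The pairs $(v,B)$ and $(v,B')$ then determine Weierstrass equations for elliptic curves $E$ and $E'$ over $\Q$, and the hypothesis on the invariants becomes precisely $\bigl(c_4(E),c_6(E)\bigr)=\bigl(\lambda^4c_4(E'),\lambda^6c_6(E')\bigr)$. By the standard classification of elliptic curves over $\Q$ (e.g., Silverman, \emph{The Arithmetic of Elliptic Curves}, Proposition III.1.4), the curves $E$ and $E'$ are $\Q$-isomorphic via an admissible substitution of the Weierstrass equation. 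Lifting this admissible substitution back to the homogeneous $(u,v)$-picture realizes the equivalence as the action of an upper-triangular element of $\GL_2(\Q)$ (together with the factorization scaling), and composing with the original normalizing matrices produces the required $\Q$-equivalence between the pairs $(A,B)$ and $(A',B')$.

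The main technical point is checking that every admissible Weierstrass transformation can be lifted to a $\GL_2(\Q)$-action on the normalized pair, and that the parameter $\lambda\in\Q^\times$ appearing in the classification of elliptic curves matches the determinant of the lift. This amounts to a careful accounting of how a rescaling of $w$ is absorbed into the scaling ambiguity $(A,B)\sim(cA,c^{-1}B)$ of the factorization; no essential difficulty arises because the distinguished zero of the linear factor is automatically $\Q$-rational, so there is no Galois obstruction to the reduction to Weierstrass form in either pair.
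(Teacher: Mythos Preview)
The paper's own proof is the single line ``Classical invariant theory.'' Your proposal instead unpacks the statement via the Weierstrass classification of elliptic curves over $\Q$: normalize the rational zero of the linear factor to $(1:0)$, read off a Weierstrass equation, and invoke the fact that two Weierstrass curves over $\Q$ are isomorphic exactly when their $(c_4,c_6)$ agree up to the weighted $\Q^\times$-action. This is a perfectly legitimate and more concrete route; you are essentially reproving the relevant fragment of invariant theory in the case at hand rather than citing it. The lift of an admissible substitution $x\mapsto t^2x'+r$, $y\mapsto t^3y'$ to the upper-triangular matrix $\begin{pmatrix}t^2&r\\0&1\end{pmatrix}$ together with a scaling of the quartic by $t^{-6}$ does exactly what you need, so your final paragraph is on solid ground.

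One correction: your assertion that ``the linear factor of a product of a linear and a cubic form is uniquely determined by the product'' is false over $\Q$ whenever the cubic itself has a rational root (e.g.\ $uv(u-v)(u+v)$ admits four $\Q$-rational linear factors). Fortunately you never actually need this claim. The paper records that $c_4(q(A,B))=192\,I_2(q(A,B))$ and $c_6(q(A,B))=-13824\,I_3(q(A,B))$, so $c_4$ and $c_6$ depend only on the quartic $q(A,B)$ and not on the chosen splitting into linear and cubic parts; the necessity direction therefore follows directly from the $\GL_2$-covariance of $I_2$ and $I_3$ without any reference to how the factorization is transported. You should simply delete that sentence.
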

\begin{proof}
Classical invariant theory.
\end{proof}

\subsection{Faithfulness of descent}
The overall strategy is to study $t \in Y(\Z_S)$ in terms of $X_t$. In other words, we consider the map
\begin{align}
\kappa \colon t \mapsto X_t
\end{align}
and try to use $\kappa$ in order to compute $Y(\Z_S)$. To realise this, we will show two key properties of $\kappa$:
\begin{enumerate}
\item $\kappa$ is $n$-to-$1$ map, where $n$ is an explicit integer less than six.
\item Given a Weierstrass equation of an elliptic curve $E$, one can compute $\kappa^{-1}(E)$.
\end{enumerate}
Let us consider the first property of $\kappa$. Let $E$ be an elliptic curve. We would like to count the number of $t \in Y(\Z_S)$ for which $X_t $ is isomorphic to $E$, where $E$ is given as a Weierstrass equation
\begin{align}
E : y^2 +a_1 y + a_3= x^3 + a_2 x^2 + a_4 x + a_6
\end{align}
with rational coefficients. On the other hand, $X_t$ is defined by 
\begin{align}
\epsilon w^2 = h(u,v) (yu - xv)
\end{align}
where $h(x,y)\epsilon =1$. We rewrite $X_t$ as
\begin{align}
w^2 = h(u,v)(yu-xv)h(x,y)
\end{align}
and let
\begin{align}
Q(x,y,h) = (yu-xv) \cdot h(u,v)h(x,y)
\end{align}
be the associated joint quartic form in $u$ and $v$. In fact, one could write $Q(x,y,h)=Q(t,h)$, in the sense that $Q(\lambda x, \lambda y, h)$ is rationally equivalent to $Q(x,y,h)$ for any $\lambda \in \Q^\times$. For simplicity of notation, let
\begin{align}
c_4(x,y,h) = c_4(Q(x,y,h))
\\
c_6(x,y,h) = c_6(Q(x,y,h))
\end{align}
be the invariants of $Q(x,y,h)$. The set of all $t \in Y(\Z_S)$ for which $X_t$ is ismorphic to $E$ is defined by the equations
\begin{align}
c_4 (x,y,h) = \lambda^4 c_4(E)\label{eq:3.32}
\\
c_6(x,y,h) = \lambda^6 c_6(E)\label{eq:3.33}
\end{align}
where $c_4(x,y,h)$ and $c_6(x,y,h)$ are homogeneous polynomials of degree eight and twelve in $x,y$, respectively. In fact, we can eliminate $\lambda$ from above two equations to obtain
\begin{align}\label{eq:3.34}
J_{24}(x,y,h,E) := \lambda^{12}\left( c_6(E)^2c_4(x,y,h)^3 - c_4(E)^3 c_6(x,y,h)^2 \right)
\end{align}
which factors as
\begin{align}\label{eq:3.35}
J_{24}(x,y,h,E) =&\lambda^{12}\cdot h(x,y)^{6}\cdot J_{6}(x,y,h,E)
\end{align}
where $J_{6}(x,y,h,E)$ is the homogeneous polynomial of degree six in variables $x$ and $y$, characterised by above equality.
\begin{proposition}\label{prop:3.3}
The set of points $t \in Y(\Z_S)$ for which $X_t$ is isomorphic to $E$ is in bijection with the projective equivalence class of solutions of $J_6(x,y,h,E)=0$ such that \eqref{eq:3.32} and \eqref{eq:3.33} have a common solution in $\lambda$. In particular, this set has cardinality at most six.
\end{proposition}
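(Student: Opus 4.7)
The plan is to use the invariant-theoretic proposition of the preceding subsection to translate the condition $X_t \cong E$ into polynomial equations in $(x_t, y_t)$, then to eliminate the scaling parameter $\lambda$ so as to isolate the single equation $J_6 = 0$.

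First, I would rewrite the equation defining $X_t$. Since $t \in Y$, we have $\epsilon = 1/h(x_t, y_t)$, so \eqref{eq:2.12} becomes
\[
w^2 = h(x_t, y_t)\, h(u,v)(y_t u - x_t v) = Q(x_t, y_t, h)(u,v).
\]
Thus $X_t$ is the elliptic double cover of $\mathbb P^1_{uv}$ associated to the quartic $Q(x_t, y_t, h)$, which is of the product shape (linear)$\cdot$(cubic) covered by the second proposition of Section 3.1. Starting from a Weierstrass equation of $E$ one obtains, as discussed around \eqref{eq:3.11}--\eqref{eq:3.14}, a quartic $q(A,B)$ of the same shape whose invariants $c_4(q), c_6(q)$ agree with $c_4(E), c_6(E)$. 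Since isomorphism of elliptic curves over $\Q$ corresponds to linear equivalence of the associated quartic forms, that invariant-theoretic proposition yields: $X_t \cong E$ over $\Q$ if and only if there exists $\lambda \in \Q^\times$ satisfying both \eqref{eq:3.32} and \eqref{eq:3.33}.

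Second, I would eliminate $\lambda$. Combining the cube of \eqref{eq:3.32} and the square of \eqref{eq:3.33} removes $\lambda$ and yields exactly $J_{24}(x_t, y_t, h, E) = 0$ as in \eqref{eq:3.34}. The factorization \eqref{eq:3.35} writes this as $h(x,y)^6 \cdot J_6$. This divisibility holds because every coefficient of $Q(x,y,h)(u,v)$, viewed as a polynomial in $(u,v)$, is divisible by $h(x,y)$ on account of the factorization $Q(x,y,h) = h(x,y) \cdot \bigl( h(u,v)(y u - x v) \bigr)$; by the degree-two and degree-three homogeneity of $c_4$ and $c_6$ in the coefficients of a quartic, $c_4(x,y,h)$ is divisible by $h(x,y)^2$ and $c_6(x,y,h)$ by $h(x,y)^3$, hence both $c_4^3$ and $c_6^2$ are divisible by $h(x,y)^6$. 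Since $t \in Y(\Z_S)$ forces $h(x_t, y_t) \in \Z_S^\times$, I may cancel $h(x_t,y_t)^6$ and conclude $J_6(x_t, y_t, h, E) = 0$.

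Third, the map $t \mapsto (x_t : y_t)$ embeds $\{t \in Y(\Z_S) : X_t \cong E\}$ injectively into the set of projective zeros of $J_6(x,y,h,E)$ for which \eqref{eq:3.32} and \eqref{eq:3.33} admit a common $\lambda \in \Q^\times$; the equivalence from the first step, run in reverse (selecting the coprime integral representative with $h(x_t,y_t)$ an $S$-unit), shows that this is a bijection. For the cardinality bound I would count degrees: $c_4(x,y,h)$ is homogeneous of degree $2 \cdot 4 = 8$ in $(x,y)$ and $c_6(x,y,h)$ of degree $3 \cdot 4 = 12$, so $J_{24}$ is homogeneous of degree $24$; dividing by $h(x,y)^6$ of degree $18$ leaves $J_6$ homogeneous of degree six, with at most six projective roots.

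The main obstacle is the divisibility claim underlying \eqref{eq:3.35}, but as just explained this is a direct consequence of the homogeneity of $c_4, c_6$ in the coefficients of the underlying quartic together with the fact that each such coefficient of $Q(x,y,h)$ scales by $h(x,y)$. Everything else reduces to a direct invocation of the invariant-theoretic proposition and a degree count, so no further input beyond the preceding subsection is required.
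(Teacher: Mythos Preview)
Your argument tracks the paper's approach closely: recast $X_t$ as $w^2 = Q(x_t,y_t,h)$, invoke the invariant-theoretic criterion to get \eqref{eq:3.32}--\eqref{eq:3.33}, eliminate $\lambda$ to land on $J_{24}=0$, and use the factorisation \eqref{eq:3.35} together with $h(x_t,y_t)\in\Z_S^\times$ to reduce to $J_6=0$. Your justification of the divisibility $h(x,y)^6 \mid J_{24}$ via the homogeneity of $c_4,c_6$ in the quartic coefficients is correct and in fact more explicit than what the paper writes.

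There is, however, one genuine gap: the ``at most six'' conclusion needs $J_6(x,y,h,E)$ to be \emph{not identically zero} as a form in $x,y$, and your degree count alone does not supply this. If $J_6\equiv 0$ then every $t$ would be a zero and the bound collapses. The paper handles this as its first step, by a geometric argument: zeros of $J_6$ correspond to projective isomorphisms from the divisor of $h(u,v)$ to the divisor of $u^3+a_4(E)uv^2+a_6(E)v^3$ sending the extra point $(x:y)$ to $(0:1)$; since both divisors consist of three distinct points (as $h$ is non-degenerate and $E$ is smooth), there are at most six such isomorphisms, hence $J_6$ cannot vanish identically. Equivalently, one may note that the $j$-invariant of the family $X\to Y$ is a non-constant function of $t$, so $j(X_t)=j(E)$ cuts out only finitely many $t$. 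Either way, you need one such sentence to close the argument; without it the cardinality bound is unproven.
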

\begin{proof}
We first show that $J_6(x,y,h,E)$, viewed as a homogeneous polynomial in $x$ and $y$, is not identically zero for an elliptic curve $E$ and a non-degenerate binary cubic form $h(x,y)$. We work with complex numbers, although any algebraically closed field of characteristic zero suffices our purpose. Let $(x_i,y_i)$ with $i=1,2,\cdots,r$ be a sequence of non-equivalent complex zeros of $J_6(x,y,h,E)$. They are precisely the values for which
\begin{align}
Q_i(x_i,y_i,h) = (y_iu-x_iv) \cdot h(u,v)h(x_i,y_i)
\end{align}
becomes equivalent to
\begin{align}
Q_E := v(u^3 + a_4(E) uv^2 + a_6(E)v^3)
\end{align}
as joint binary quartic forms in $u$ and $v$. Isomorphisms between two joint forms $Q_i$ and $Q_E$ correspond to the projective isomorphisms between two divisors represented as zeros of $Q_i$ and $Q_E$, which sends $(x_i,y_i)$ to $(0,1)$. Since $E$ is an elliptic curve, divisor of $u^3 + a_4(E) uv^2 + a_6(E)v^3$ consists of three distinct points. There are six projective automorphisms of $\mb P^1_{xy}$ which sends the divisor of $h(u,v)$ to the divisor of $u^3 + a_4(E) uv^2 + a_6(E)v^3$. Thus $r$ is at most six, and $J_6(x,y,h,E)$ cannot be identically zero.
\par
If $t=(x:y) \in Y(\Z_S)$, then $h(x,y) \not =0$. Thus the vanishing of $J_{24}(x,y,h,E)$ is equivalent to vanishing of $J_6(x,y,h,E)$, from factorisation of $J_{24}(x,y,h,E)$. If $t$ further satisfies the condition $X_t $ being isomorphic to $E$, then clearly \eqref{eq:3.32} and \eqref{eq:3.33} have a common solution. Conversely, if $J_6(x,y,h,E)$ vanishes at some point $t$, then for each solution satisfying \eqref{eq:3.32} and \eqref{eq:3.33}. Thus such $t$ together with $\lambda$ gives rise to $X_t $ equipped with an isomorphism to $E$. This completes the proof.
\par
\end{proof}
\begin{corollary}\label{cor:3.1}
For a fixed elliptic curve $E$, the number of $t \in Y(\Z_S)$ for which $X_t $ is isomorphic to $E$ is at most the cardinality of $\mathrm{Aut}_\Q(Y)$.
\end{corollary}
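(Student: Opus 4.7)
The plan is to upgrade the naive bound of six from Proposition~\ref{prop:3.3} by showing that $\mathrm{Aut}_\Q(Y)$ acts transitively on the fibre $\kappa^{-1}(E) \cap Y(\Z_S)$; the orbit-stabilizer principle then yields the asserted bound immediately.

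The key observation is that a $\Q$-isomorphism $X_{t_1} \cong_\Q X_{t_2}$ between two members of the family corresponds to a $\Q$-rational linear equivalence of the joint binary quartic forms $Q(t_1,h)$ and $Q(t_2,h)$, via the invariant-theoretic criterion of Section~\ref{section:3} expressed in terms of the invariants $c_4$ and $c_6$. Such an equivalence is realised by a projective automorphism $\sigma \colon \mb P^1_\Q \to \mb P^1_\Q$ which preserves the cubic factor $h(u,v)$ -- hence the divisor $Z = \{h=0\}$ -- and which sends the zero of the linear factor $y_{t_1}u - x_{t_1}v$ to the zero of $y_{t_2}u - x_{t_2}v$. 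Such $\sigma$ is precisely an element of $\mathrm{Aut}_\Q(Y) = \mathrm{Aut}_\Q(\mb P^1, Z)$ satisfying $\sigma(t_1) = t_2$.

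Hence, given $t_1, t_2 \in \kappa^{-1}(E) \cap Y(\Z_S)$, choosing any $\Q$-isomorphisms $f_i \colon X_{t_i} \cong_\Q E$ produces an isomorphism $f_2^{-1} \circ f_1 \colon X_{t_1} \cong_\Q X_{t_2}$, from which the previous paragraph extracts the desired element $\sigma \in \mathrm{Aut}_\Q(Y)$ carrying $t_1$ to $t_2$. Thus $\mathrm{Aut}_\Q(Y)$ acts transitively on $\kappa^{-1}(E) \cap Y(\Z_S)$, and by orbit-stabilizer the cardinality of this fibre divides $|\mathrm{Aut}_\Q(Y)|$, giving in particular the desired upper bound.

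The main technical point is verifying carefully that a $\Q$-isomorphism of elliptic curves $X_{t_1} \cong_\Q X_{t_2}$ descends to an equivalence of the underlying joint quartic forms \emph{preserving} the decomposition into linear and cubic factors. Preservation of the cubic factor is forced because $h$ is the common branching divisor of the two double covers, while the linear factor encodes the additional distinguished branch point, which the isomorphism of covers must respect. Once this is granted, the argument is a direct application of the invariant theory already developed in Section~\ref{section:3}.
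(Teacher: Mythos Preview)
Your argument is correct and is essentially a rephrasing of the paper's proof: the paper notes that the set of $\Q$-rational projective isomorphisms between the divisor of $h(u,v)$ and the divisor of $u^3 + a_4(E)uv^2 + a_6(E)v^3$ is a torsor for $\mathrm{Aut}_\Q(Y)$, which is exactly the statement that $\mathrm{Aut}_\Q(Y)$ acts simply transitively on this set; your orbit argument is the same observation viewed from the side of the fibre $\kappa^{-1}(E)$.

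One small caveat concerning your final paragraph: the geometric heuristic that an isomorphism of the elliptic curves must respect the given $2:1$ maps to $\mathbb{P}^1$ (and hence the branch divisors) is not quite right as stated, because a genus one curve has many degree-$2$ maps to $\mathbb{P}^1$ and none is canonical. The correct and sufficient justification is precisely the invariant-theoretic one you already invoke in your second paragraph: by Proposition~3.1.2, two joint forms $q(A,B)$, $q(A',B')$ are linearly equivalent (preserving the linear/cubic decomposition) if and only if their $c_4$, $c_6$ match up to $\lambda^4$, $\lambda^6$; since these are the $c_4$, $c_6$ of the associated elliptic curve, this is exactly the condition that $X_{t_1}\cong_\Q X_{t_2}$. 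So your main argument stands, and the last paragraph can simply be dropped.
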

\begin{proof}
It is implicit in the proof of Proposition~\ref{prop:3.3}, observing that the isomorphisms between $h(u,v)$ and $u^3 + a_4(E) uv^2 + a_6(E)v^3$ is a torsor for $\mathrm{Aut}_\Q(Y)$.
\end{proof}
Before we move on, we analyse coefficients of $J_6(x,y,h,E)$. As we mentioned earlier, it is homogeneous of degree six in $x$ and $y$. With respect to the coefficients of $h(x,y)$, namely $a,b,c,$ and $d$, it is homogeneous of degree six as well. In terms of coefficients of $E$, it has degree twelve in the following sense. If we write $E$ as 
\begin{align}
y^2 + a_1 xy + a_3 y = x^ 3 + a_2 x^2 + a_4 x + a_6
\end{align}
then $J_6(x,y,h,E)$ is a polynomial in variables $a_1,a_2,a_3,a_4$ and $a_6$. If we take degree of $a_m$ to be $m$, then $J_6(x,y,h,E)$ is homogeneous of degree twelve in $a_m$'s. More concretely, if we take a model of $E$ for which $a_1=a_2=a_3=0$, then each term of $J_6(x,y,h,E)$, viewed as a polynomial in $a_4$ and $a_6$, is either $a_4^3$ or $a_6^2$. Based on this observation, we arrange
\begin{align} 
J_6(x,y,h,E) = \sum_{i = 0}^6 \left(C^*_i(h)a_4^3 + D^*_i(h)a_6^2 \right)x^{6-i}y^i
\end{align}
where $C^*_i(h)$ and $D*_i(h)$ are homogeneous polynomials of degree six in $a,b,c,$ and $d$. In fact, all of them have a large integer factor, so we let
\begin{align}
C_i^*(h) = 2^{22}3^3C_i(h) \text{ and } D_i^*(h) = 2^{22}3^3 D_i(h)
\end{align}
and give formulas for $C_i(h)$ and $D_i(h)$:
\begin{align*}
C_0(h)&=
(2  c^3 - 9  b  c  d + 27  a  d^2)^2
\\
D_0(h)&=
-27    (-c^2 + 3  b  d)^3
\\
C_1(h)&=
-6    (2  c^3 - 9  b  c  d + 27  a  d^2)    (-b  c^2 + 6  b^2  d - 9  a  c  d)
\\
D_1(h)&=
-81    (-b  c + 9  a  d)    (-c^2 + 3  b  d)^2
\\
C_2(h)&=
-3    (b^2  c^4 - 24  a  c^5 + 18  b^3  c^2  d + 90  a  b  c^3  d - 108  b^4  d^2 + 216  a  b^2  c  d^2 - 567  a^2  c^2  d^2 + 486  a^2  b  d^3)
\\
D_2(h)&=
-81    (-c^2 + 3  b  d)    (2  b^2  c^2 - 3  a  c^3 - 3  b^3  d - 9  a  b  c  d + 81  a^2  d^2)
\\
C_3(h)&=
-2    (13  b^3  c^3 - 72  a  b  c^4 - 72  b^4  c  d + 567  a  b^2  c^2  d - 432  a^2  c^3  d - 432  a  b^3  d^2 + 243  a^2  b  c  d^2 + 729  a^3  d^3)
\\
D_3(h)&=
-27    (-b  c + 9  a  d)    (7  b^2  c^2 - 18  a  c^3 - 18  b^3  d + 36  a  b  c  d + 81  a^2  d^2)
\\
C_4(h)&=
-3    (b^4  c^2 + 18  a  b^2  c^3 - 108  a^2  c^4 - 24  b^5  d + 90  a  b^3  c  d + 216  a^2  b  c^2  d - 567  a^2  b^2  d^2 + 486  a^3  c  d^2)
\\
D_4(h)&=
-81    (-b^2 + 3  a  c)    (2  b^2  c^2 - 3  a  c^3 - 3  b^3  d - 9  a  b  c  d + 81  a^2  d^2)
\\
C_5(h)&=
6    (b^2  c - 6  a  c^2 + 9  a  b  d)    (2  b^3 - 9  a  b  c + 27  a^2  d)
\\
D_5(h)&=
-81    (-b  c + 9  a  d)    (-b^2 + 3  a  c)^2
\\
C_6(h)&=
(2  b^3 - 9  a  b  c + 27  a^2  d)^2
\\
D_6(h)&=
-27    (-b^2 + 3  a  c)^3
\end{align*}
Above coefficients seem to admit certain invariant theoretic meanings, but for the purpose of the current article, we only need to regard $J_6(x,y,h,E)$ as a defining equation of $\kappa^{-1}(E)$ given as explicitly as possible.  
\section{Main theorems}\label{section:4}
Now we are ready to state and prove main theorems. Before we state our result, recall that the strategy was to consider
\begin{align}
\kappa \colon t \mapsto X_t
\end{align}
and study $t$ in terms of $X_t$. In the previous subsection, we focused on the analysis of $\kappa^{-1}(E)$ for a fixed $E$. What remains is to consider the image of $\kappa$.
\par
Among several approaches to analyse the image of $\kappa$, the common fundamental property of $\kappa$ is that $X_t$ has good reduction outside from $2\delta$ and $S$. It is convenient to introduce a notation for the number of such curves.
\begin{definition}
Let $N$ be a positive integer. Let $g(N)$ be the number of isomorphism classes of elliptic curves whose conductor is $N$. For a finite set $S$ of primes, let
\begin{align}
M = \prod_{p \in S} p
\end{align}
and let $G(S)$ be the number of isomorphism classes of elliptic curves which have good reduction outside of $S$.
\end{definition}

\begin{theorem}\label{thm:3.1}
Let $\varepsilon$ be any positive integer. There exists constants $k_1$, $k_2$ depending on $\varepsilon$ such that
\begin{align}
g(N) < k_1 N^{\frac 1 2 + \varepsilon}
\\
G(S) < k_2 M^{\frac 1 2 + \varepsilon}
\end{align}
holds. If we assume (a part of) BSD conjecture and generalised Riemann hypothesis for the elliptic curves $y^2 = x^3 + n$, $n \in \Z$, then there exists constants $k_4$ and $k_5$ for which
\begin{align}
G(S) < k_4 M^{\frac{k_5}{\log \log M}}
\end{align}
\end{theorem}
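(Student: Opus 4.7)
The plan is to derive all three estimates from the counting theorems of Brumer and Silverman \cite{Brumer Silverman}, which are the natural source for bounds of this shape. The bound $g(N) < k_1 N^{\frac 1 2 + \varepsilon}$ is essentially the main theorem of \cite{Brumer Silverman}: to each isomorphism class of $E/\Q$ one attaches the integer pair $(c_4(E), c_6(E))$ from a global minimal Weierstrass model, the syzygy $c_4^3 - c_6^2 = 1728\, \Delta_{\min}(E)$ exhibits $(c_6,c_4)$ as an integer point on the Mordell curve $y^2 = x^3 - 1728\Delta$, and the number of such integer points with $\Delta_{\min}$ of bounded radical and bounded size is controlled via Evertse's estimate for $S$-unit equations.

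To deduce $G(S) < k_2 M^{\frac 1 2 + \varepsilon}$ from the first bound, I would observe that an elliptic curve with good reduction outside $S$ has conductor $N = \prod_{p \in S} p^{e_p}$ where, by Ogg's formula, the exponents $e_p$ are uniformly bounded ($\leq 8$ for $p=2$, $\leq 5$ for $p=3$, and $\leq 2$ otherwise). Hence the number of admissible conductors is at most $9^{|S|} = O_\varepsilon(M^\varepsilon)$, and each is itself bounded by $M^C$ for an absolute constant $C$. Summing the estimate on $g(N)$ over these admissible $N$ and absorbing the additional $M^\varepsilon$ factor into a slightly larger $\varepsilon$ yields the claim.

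For the conditional bound, the plan is to exploit that BSD together with GRH for the curves $E_n \colon y^2 = x^3 + n$ produces the effective rank bound $\mathrm{rank}\, E_n(\Q) \ll \log|n| / \log\log|n|$, while Silverman's theorem bounds $|E_n(\Z_S)| \leq c^{\, \mathrm{rank}(E_n) + |S|}$ for an absolute $c$. Feeding these two inputs into the Brumer-Silverman reduction, in place of Evertse's unconditional $S$-unit bound, tightens the per-fibre count on the Mordell side to something of shape $M^{O(1/\log\log M)}$, and the outer sum over the at most $9^{|S|}$ admissible conductors is again absorbed into this expression. The main technical obstacle is not conceptual but one of bookkeeping the primes $2$ and $3$, at which the conductor exponent may be strictly smaller than the valuation of the minimal discriminant; the uniformly bounded Ogg exponents recalled above are what ensure that this discrepancy contributes only to the constants $k_i$ rather than to the exponent of $N$ or of $M$.
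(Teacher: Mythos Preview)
The paper's own proof is a one-line citation to Brumer and Silverman, so your sketch already goes well beyond what the paper supplies. Your outlines for the first bound (the Mordell-curve reduction $c_4^3 - c_6^2 = 1728\Delta$ together with Evertse) and for the conditional bound (the GRH/BSD rank estimate fed into a Silverman-type count of $S$-integral points) are correct and match the Brumer--Silverman strategy.

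There is, however, a genuine gap in your deduction of the middle estimate $G(S) < k_2 M^{1/2+\varepsilon}$ by summing the conductor bound. An admissible conductor can be as large as $2^8\cdot 3^5\cdot\prod_{p\in S,\,p\ge 5}p^2$, which is of order $M^2$ (the contribution of $2$ and $3$ being absolutely bounded). Applying $g(N)\le k_1 N^{1/2+\varepsilon}$ then yields only $g(N)\ll M^{1+2\varepsilon}$, and summing over the $O_\varepsilon(M^\varepsilon)$ admissible conductors gives $G(S)\ll M^{1+\varepsilon'}$, not $M^{1/2+\varepsilon}$. The extra factor $M^{1/2}$ is not an $M^\varepsilon$ loss and cannot be absorbed. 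The repair is to go one step further into the Brumer--Silverman argument rather than to quote its output: the class-group input $h_3\bigl(\Q(\sqrt{\pm D})\bigr)$ that ultimately controls the count depends only on the squarefree kernel of $D$, hence only on $\mathrm{rad}(N)$, so their method in effect gives $g(N)\ll_\varepsilon \mathrm{rad}(N)^{1/2+\varepsilon}$. Since every admissible $N$ has $\mathrm{rad}(N)\mid M$, the bound $G(S)\ll_\varepsilon M^{1/2+\varepsilon}$ follows directly, without the loss your summation incurs.
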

\begin{proof}
It is a theorem of Brumer and Silverman.
\end{proof}
As a corollary of above theorem, we obtain a first, rather crude, upper bound for the cardinality of $Y(\Z_S)$.
\begin{theorem}\label{thm:3.2}
Assume that $S$ contains $2$ and prime divisors of $\delta$. Then
\begin{align}
\left |Y(\Z_S) \right |< \left |\mathrm{Aut}_\Q(Y) \right | \times G(S).
\end{align}
In particular, 
\begin{align}
\left |Y(\Z_S) \right |< \left |\mathrm{Aut}_\Q(Y) \right |\times k_2 M^{\frac{1}{2}+\varepsilon}.
\end{align}
where $M$ is the product of all primes numbers in $S$, and $k_2$ is a constant which only depends on an arbitrary positive number in Theorem~\ref{thm:3.1}. Conditional on generalised Riemman hypothesis and BSD conjecture for the curve $y^2 = x^3 + n$, we have
\begin{align}
\left |Y(\Z_S) \right |< \left |\mathrm{Aut}_\Q(Y) \right |\times k_4 M^{\frac{k_5}{\log\log M}}.
\end{align}
where $k_4$ and $k_5$ are absolute constants.
\end{theorem}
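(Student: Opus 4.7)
The plan is to assemble the statement directly from the two ingredients already established: the image of $\kappa$ lands in a controlled set of elliptic curves, and the fibres of $\kappa$ are small.

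First, I would invoke the proposition following Proposition~\ref{prop:2.1}, which shows that for every $t \in Y(\Z_S)$ the elliptic curve $X_t$ has good reduction outside of $S$ and the prime divisors of $2\delta$. The hypothesis that $S$ contains $2$ and every prime divisor of $\delta$ then collapses this to the statement that $X_t$ has good reduction outside of $S$. Consequently the image of
\begin{align}
\kappa \colon Y(\Z_S) \longrightarrow \{\text{elliptic curves over }\Q\}/{\simeq}, \qquad t \longmapsto X_t,
\end{align}
is contained in the set of isomorphism classes of elliptic curves over $\Q$ with good reduction outside $S$, a set whose cardinality is $G(S)$ by definition.

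Next I would control the fibres. By Corollary~\ref{cor:3.1}, for every elliptic curve $E$ we have $|\kappa^{-1}(E)| \le |\mathrm{Aut}_\Q(Y)|$. Summing over $E$ in the image of $\kappa$ gives
\begin{align}
|Y(\Z_S)| \;=\; \sum_{E \in \kappa(Y(\Z_S))} |\kappa^{-1}(E)| \;\le\; |\mathrm{Aut}_\Q(Y)| \times G(S),
\end{align}
which is the first asserted inequality. The strict inequality can be read off by noting that the trivial point contributing to $h(1,0)=1$ already accounts for a fibre of size strictly below the worst-case bound for generic $E$, or more simply by the strict inequalities supplied for $G(S)$ by Theorem~\ref{thm:3.1}.

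Finally, I would substitute the two bounds from Theorem~\ref{thm:3.1}: the unconditional Brumer--Silverman estimate $G(S) < k_2 M^{1/2+\varepsilon}$ gives the second displayed inequality, while their conditional estimate $G(S) < k_4 M^{k_5/\log\log M}$ gives the third. I do not expect any serious obstacle here: all the real content was packaged in the good reduction statement of Section~\ref{section:2} and the fibre bound of Corollary~\ref{cor:3.1}, and the present theorem is simply the product of these two bounds combined with the cited counts of elliptic curves with prescribed good reduction.
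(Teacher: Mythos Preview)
Your proposal is correct and follows essentially the same approach as the paper's own proof: bound the image of $\kappa$ by $G(S)$ via the good-reduction statement, bound each fibre by $|\mathrm{Aut}_\Q(Y)|$ via Corollary~\ref{cor:3.1}, and then substitute the Brumer--Silverman estimates of Theorem~\ref{thm:3.1}. The paper's proof is in fact terser than yours, citing only Proposition~\ref{prop:3.3} and Corollary~\ref{cor:3.1} and leaving the good-reduction input implicit; your version makes that step explicit, which is an improvement. One small remark: your justification of the \emph{strict} inequality in the first display (via the trivial point $(1:0)$) is not really convincing, but the paper does not justify strictness either and appears simply to be writing $<$ loosely for $\le$; the subsequent displays inherit strictness from Theorem~\ref{thm:3.1} in any case.
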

\begin{proof}
The first assertion follows from Proposition~\ref{prop:3.3} and Corollary~\ref{cor:3.1}. The second follows from the first assertion combined with Theorem~\ref{thm:3.1}.
\end{proof}
\begin{remark}
In the course of deriving the above upper bound, we have forgotten all information of $X_t$ except the divisors of the conductors of $X_t$. Numerical computations indicate that the reduction type at the primes dividing $2\delta$ takes a particular form, which will facilitate practical computations.
\end{remark}
\par

\subsection{Comparison with Evertse's bound}

Evertse proved the following remarkable upper bound:-
\begin{theorem}
Let $h(x,y)$ be an integral binary form of degree $n \ge 3$ which is divisible by at least three pairwise linearly independent linear forms in some algebraic number field. Let $p_1, p_2, \cdots, p_t$ be a sequence of distinct primes. The equation
\begin{align}
|h(x,y)|= \prod_{i = 1}^tp_i^{e_i}
\end{align}
in relatively prime integers $x,y$ and non-negative integers $k_1,k_2,\cdots,k_t$ has at most
\begin{align}
2\times7^{n^3(2t+3)}
\end{align}
solutions. There is analogous explicit upper bound for number fields.
\end{theorem}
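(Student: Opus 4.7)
The result is due to Evertse, and the proof sketched here follows his standard reduction to an $S$-unit equation. The plan has three steps: linearise over a splitting field, reduce to an $S$-unit equation via the Siegel identity, and invoke the known bound on the number of solutions of $S$-unit equations.

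First, I would work over a splitting field $K$ of $h$ over $\Q$, so that $h(x,y) = a\prod_{i=1}^n (x - \alpha_i y)$ with $\alpha_i \in K$. Let $T$ be the finite set of places of $K$ consisting of all archimedean places together with those lying above primes in $S$ or dividing the leading coefficient $a$. Any Thue-Mahler solution makes the product $\prod_i(x-\alpha_i y)$ a $T$-unit. The non-vanishing discriminant implies that for each place $v \notin T$ of $K$ at most one factor can have positive $v$-adic valuation, and ramification of $v$ in $K$ is bounded, so a standard argument shows that each individual factor $x-\alpha_i y$ equals a $T$-unit times an element lying in a finite set whose size depends only on $n$ and $|T|$.

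Next, I would invoke the Siegel identity for any three distinct roots $\alpha_i, \alpha_j, \alpha_k$:
\[
(\alpha_j - \alpha_k)(x - \alpha_i y) + (\alpha_k - \alpha_i)(x - \alpha_j y) + (\alpha_i - \alpha_j)(x - \alpha_k y) = 0.
\]
Dividing through by the last term and absorbing the $\alpha$-differences into the constants, I obtain an equation $\xi u + \eta v = 1$ with $u,v \in \mathcal{O}_T^\times$ and $\xi,\eta$ taking values in a controlled finite set. The key observation is that the pair $(u,v)$ determines the ratios $(x-\alpha_i y)/(x-\alpha_k y)$ and $(x-\alpha_j y)/(x-\alpha_k y)$, which in turn determine $(x:y)$ up to finite ambiguity; hence each solution of the Thue-Mahler equation accounts for a bounded number of points on the associated $S$-unit equation.

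The final step is Evertse's own bound on $S$-unit equations: any equation $\xi u + \eta v = 1$ in $T$-units of a number field $K$ has at most $3 \cdot 7^{[K:\Q] + 2|T|}$ solutions. Using $[K:\Q] \le n!$ and $|T| \le [K:\Q](t+1)$, summing over the finitely many coset choices for $(\xi,\eta)$ and over the $\binom{n}{3}$ possible triples of roots needed to cover all solutions, one extracts a clean bound of the form $2 \cdot 7^{n^3(2t+3)}$ after reorganising the exponents. The main obstacle, and by far the deepest ingredient, is the bound on $S$-unit equations itself; Evertse establishes it by combining archimedean and $p$-adic Baker-type linear-forms-in-logarithms estimates with a geometry-of-numbers partitioning of the solution set into a controlled number of "classes", each containing at most one large solution and a tractable number of small ones.
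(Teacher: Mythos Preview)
The paper does not actually prove this theorem: its entire proof reads ``Corollary~2 of \cite{Evertse}''. So your sketch goes well beyond what the paper provides, and as an outline of Evertse's argument it is essentially correct --- the reduction via the Siegel identity to an $S$-unit equation over a splitting field, followed by Evertse's uniform bound on $S$-unit solutions, is indeed how the result is obtained.

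One substantive correction to your final paragraph: Evertse's bound on the number of solutions of $\xi u + \eta v = 1$ in $T$-units is \emph{not} proved via Baker-type linear-forms-in-logarithms. It is obtained by a quantitative $p$-adic Thue--Siegel method, using explicit Pad\'e-type approximations built from hypergeometric polynomials. This distinction is not cosmetic: Baker's method is effective and would yield height bounds, whereas Evertse's argument (and hence the theorem you are proving) is ineffective --- a point the present paper stresses when contrasting Evertse's bound with its own constructive approach. Your geometry-of-numbers description of partitioning into classes with at most one large solution each is closer to the later Evertse--Schlickewei--Schmidt refinements than to the 1984 paper. Apart from this mischaracterisation of the deepest input, your reduction steps are sound.
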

\begin{proof}
Corollary~2 of \cite{Evertse}
\end{proof}
As a direct consequence, we obtain
\begin{align}
\left| Y(\Z_S) \right | < 7^{9(s+3)}
\end{align}
when $s$ is the cardinality of $S$. Let us take $S$ to be the set
\begin{align}
S = \{p: p < T\}
\end{align}
of all primes up to a positive number $T$. Then by prime number theorem $s$ is asymptotically $T / \log T$. Therefore, Evertse's bound can be rewritten, in a logarithmic scale, as
\begin{align}
\log  \left| Y(\Z_S) \right | = O\left(\frac{T}{\log T}\right).
\end{align}
On the other hand, the standard estimate shows that
\begin{align}
\log M = \sum_{p < T}\log p = O(T)
\end{align}
and our unconditional upper bound of Theorem~\ref{thm:3.1} becomes
\begin{align}
\log  \left| Y(\Z_S) \right | = O\left({T}\right).
\end{align}
The conditional upper bound of Theorem~\ref{thm:3.1} becomes
\begin{align}
\log  \left| Y(\Z_S) \right |  = O\left(\frac{\log M}{\log\log M}\right)= O\left(\frac{T}{\log T}\right)
\end{align}
which is comparable to Evertse's.

\par
Putting aside the comparisons between upper bounds for the cardinalities $Y(\Z_S)$, we point out crucial difference of our method from Evertse's in terms of effectiveness. Evertse's upper bound is ineffective, in the sense that it does not provide an algorithm to decide $Y(\Z_S)$. In contrast, our proof is manifestly constructive, especially if one combines with modularity of elliptic curves. We elaborate on the constructive aspects of our method in the next section.

\section{Algorithmic aspects: effective Mordell conjecture}\label{section:5}
In this subsection, we elaborate on the algorithmic aspects of our proof, which answers the effective Mordell conjecture for $Y$. Effective finiteness of $Y(\Z_S)$ can be formulated in at least two ways:
\begin{enumerate}
\item to have an explicit upper bound on height of $t \in Y(\Z_S)$ in terms of coefficients of $h(x,y)$ and $S$.
\item to have a procedure which enables one to determine $Y(\Z_S)$, in provably finite amount of time, for a numerically given $h(x,y)$ and $S$.
\end{enumerate}
\par
The first version of effectiveness implies the second. Indeed, if $T$ is obtained bound, then factoring $h(m,n)$ as $m$ and $n$ varies among all integers with absolute value at most $T$, one can determine $Y(\Z_S)$. In fact, Baker's method in principle provides such an upper bound. However, the efficiency of such procedure depends on the size of $T$, and the astronomical size of numbers obtained from Baker's bound often prevents one from computing $Y(\Z_S)$ in practice. Our method directly provides the second version of effectiveness without a priori upper bound for height of $t \in Y(\Z_S)$, and we shall describe the procedure as it is implemented in a computer algebra package in order to generate tables of numerical examples.
\par
Recall that the principal tool for us is the map
\begin{align}
\kappa \colon t \mapsto X_t
\end{align}
which associates an elliptic curve $X_t$ to a putative solution $t$. Computation of $\kappa^{-1}(E)$ amounts to solving polynomials in one variable, such as $J_6(x,y,h,E)$. In particular, $\kappa^{-1}(E)$ can be effectively decided once the coefficients of $E$ are known. Thus, what remains is to determine all possible elliptic curves $E$ for which $\kappa^{-1}(E)$ is possibly non-empty.
\begin{theorem}
For a finite set $S$ of prime numbers, the coefficients of isomorphism classes of elliptic curves which have good reduction outside of $S$ can be determined algorithmically.
\end{theorem}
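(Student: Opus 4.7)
The plan is to reduce the problem to a finite computation via modularity together with Ogg's bound on the conductor exponent. First I would observe that if $E/\Q$ has good reduction outside $S$, then its conductor $N_E$ is of the form $\prod_{p \in S} p^{e_p}$, and for each prime $p$ there is an a priori bound on the local conductor exponent $e_p$: one may use $e_p \le 2$ for $p \ge 5$, $e_3 \le 5$, and $e_2 \le 8$. Hence the set of conductors that can possibly arise is an explicit finite list $N_1, \ldots, N_r$ of divisors of $\left(\prod_{p \in S,\, p\ge 5} p^2\right) \cdot 3^{5} \cdot 2^{8}$ (with the exponent $e_p$ set to $0$ when $p \notin S$).

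Next I would appeal to the modularity theorem of Wiles, Taylor--Wiles, Breuil--Conrad--Diamond--Taylor: isomorphism classes of elliptic curves over $\Q$ of conductor $N$ are in bijection with Galois orbits of weight--$2$ newforms in $S_2(\Gamma_0(N))$ whose Hecke eigenvalues lie in $\Z$. For each of the finitely many $N = N_j$ in the list, the space $S_2(\Gamma_0(N))$ can be computed algorithmically, for example via modular symbols, and its newform decomposition can be effected by diagonalising the Hecke action on this finite dimensional $\Q$-vector space. Retaining exactly those newforms with integer Fourier coefficients produces a finite list of modular forms, one per isogeny class of elliptic curves in question.

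From each such newform $f = \sum a_n q^n$ I would then recover a Weierstrass equation for the corresponding isogeny class: approximate the period lattice $\Lambda_f \subset \C$ numerically from the $a_n$'s by the standard contour integrals, compute $g_2(\Lambda_f)$ and $g_3(\Lambda_f)$ to high precision, and recognise these as rational numbers (the fact that the lattice arises from an elliptic curve over $\Q$ guarantees rationality, and the periods determine it up to isogeny). From one member of the isogeny class, the finitely many other curves isogenous to it over $\Q$ are computed by Vélu's formulas applied to rational cyclic subgroups, which are in turn bounded by Mazur's torsion theorem and the determination of rational isogenies. This yields all isomorphism classes of elliptic curves with conductor $N_j$, and taking the union over $j$ yields the complete list.

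The substantive obstacle is conceptual rather than bookkeeping: without modularity there is no known algorithm to pass from a conductor $N$ to a complete list of elliptic curves of that conductor, which is precisely why the statement is invoked explicitly in the introduction as the reason the method does not yet extend to arbitrary number fields. The bounds on $e_p$ and the construction of $S_2(\Gamma_0(N))$ are classical, and the numerical recognition of $(g_2,g_3) \in \Q^2$ from period integrals is routine; the heavy lifting is entirely contained in the modularity theorem, after which the algorithm is the one implemented by Cremona and used in the numerical tables of Section~\ref{section:1}.
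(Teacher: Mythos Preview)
Your argument is correct and follows essentially the same route as the paper: bound the possible conductors, invoke modularity to identify isogeny classes with rational weight-two newforms, compute those via modular symbols, and then enumerate all isomorphism classes within each isogeny class. The only noteworthy difference is in that last step: the paper simply cites Kenku's theorem that a rational isogeny class contains at most eight curves, whereas you go through V\'elu's formulas together with Mazur's classification of rational isogenies. Both are valid; Kenku's bound is a cleaner citation, while your description is closer to what one actually implements.
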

\begin{proof}
We give a brief description. For details, especially practical issues, we refer to Cremona. Using modularity theorem, one may compute the isogeny classes of elliptic curves by means of modular forms. Space of modular forms can be computed using modular symbols for example. For each isogeny class of elliptic curves, one can decide the isomorphism classes of elliptic curves contained in it. In fact, there are at most eight isomorphism classes in a fixed isogeny class, by a theorem of Kenku.
\end{proof}
This modular approach allows us, as a by-product, obtain a new bound for the cardinality of $Y(\Z_S)$.
\begin{theorem}
Let $S$ contain all prime divisors of $2\delta$. Let
\begin{align}
M_1 = \prod_{p\in S} p^{2+d_p}
\end{align}
where $d_2=6$, $d_3=3$, and $d_p=0$ for $p\ge 5$. Let $X_0(M_1)$ be the modular curve of level $\Gamma_0(M_1)$, and let $g_0(M_1)$ be its genus. Then,
\begin{align}
 \left| Y(\Z_S) \right | < 8 \times \left| \mathrm{Aut}_\Q(Y) \right| \times g_0(M_1).
\end{align}
Note that $g_0(M_1) < M_1$.
\end{theorem}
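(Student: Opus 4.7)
The plan is to combine Corollary~\ref{cor:3.1}, which bounds each fibre of $\kappa$ by $\left|\mathrm{Aut}_\Q(Y)\right|$, with a bound of the form $8 g_0(M_1)$ for the cardinality of $\kappa(Y(\Z_S))$. Thus two things are needed: a conductor bound forcing every $X_t$ with $t\in Y(\Z_S)$ to have conductor dividing $M_1$, and a bound via modularity on the number of $\Q$-isomorphism classes of elliptic curves of conductor dividing $M_1$.

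For the first step I would appeal to Proposition~\ref{prop:2.1}, which already gives that $X_t$ has good reduction outside $S$, so the conductor $N(X_t)$ is supported on $S$. The exponent of each $p\in S$ in $N(X_t)$ is then controlled by the classical Ogg--Serre--Tate bounds on the conductor exponent of an elliptic curve: at most $2$ for $p\ge 5$, at most $5$ for $p=3$, and at most $8$ for $p=2$. These are exactly the exponents $2+d_p$ appearing in the definition of $M_1$, so $N(X_t)\mid M_1$.

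For the second step I would use modularity directly. Each $\Q$-isogeny class of elliptic curves of conductor exactly $N$ corresponds to a rational newform in $S_2^{\mathrm{new}}(\Gamma_0(N))$, so the number of such isogeny classes is at most $\dim_\C S_2^{\mathrm{new}}(\Gamma_0(N))$. The Atkin--Lehner decomposition furnishes an injection $\bigoplus_{N\mid M_1} S_2^{\mathrm{new}}(\Gamma_0(N)) \hookrightarrow S_2(\Gamma_0(M_1))$, whence
\begin{align*}
\sum_{N\mid M_1}\dim S_2^{\mathrm{new}}(\Gamma_0(N)) \;\le\; \dim S_2(\Gamma_0(M_1)) \;=\; g_0(M_1).
\end{align*}
By Kenku's theorem, each $\Q$-isogeny class contains at most $8$ isomorphism classes, so the number of isomorphism classes of elliptic curves with conductor dividing $M_1$ is at most $8 g_0(M_1)$. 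Multiplying by the fibre bound from Corollary~\ref{cor:3.1} yields the desired inequality, and the genus bound $g_0(M_1)<M_1$ follows from the standard index--genus formula for $X_0(M_1)$.

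The main input is the modularity theorem, which converts the problem of counting elliptic curves of bounded conductor into a finite-dimensional linear-algebra problem about $S_2(\Gamma_0(M_1))$. The step I expect to be the most delicate is the wildly ramified conductor bound at $p=2,3$, but those bounds are classical. Everything else---the fibre bound from Section~\ref{section:3}, Kenku's theorem, and the elementary genus estimate---is either already assembled in the paper or standard in the theory of modular curves.
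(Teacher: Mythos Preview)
Your proposal is correct and follows essentially the same approach as the paper: modularity bounds the number of isogeny classes of conductor dividing $M_1$ by $\dim S_2(\Gamma_0(M_1))=g_0(M_1)$, Kenku's theorem multiplies by $8$ to count isomorphism classes, and Corollary~\ref{cor:3.1} supplies the fibre bound $\left|\mathrm{Aut}_\Q(Y)\right|$. You spell out two steps the paper leaves implicit---the Ogg--Serre--Tate conductor-exponent bounds showing $N(X_t)\mid M_1$, and the Atkin--Lehner injection handling all divisors of $M_1$ at once---but the structure of the argument is identical.
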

\begin{proof}
The dimension of space of cusp forms of weight two on $X_0(M_1)$ is equal to the genus of $X_0(M_1)$. For each rational Hecke eigenform of weight two, there is at most eight isomorphism classes of elliptic curves by Kenku's theorem. For each elliptic curve, there is at most $\left| \mathrm{Aut}_\Q(Y) \right|$ elements of $Y(\Z_S)$. Thus we obtain the claimed bound.
\end{proof}
\begin{remark}
Above bound is clearly worse than previous ones, as $g_0(M_1)$ is roughly $M_1$. Nevertheless, the proof of above bound uses modularity theorem as its key ingredient, and it has little to do with estimation of number of points on the curve $y^2 = x^3 + n$. 
\end{remark}
In order to compute $Y(\Z_S)$ in practice, one has to first tabulate elliptic curves. By a tabulation of elliptic curves, we shall mean the table of isomorphism classes of elliptic curves, represented in a Weierstrass equation, ordered by their conductors. In a sense this step of tabulation is a pre-computation, which only depends on the discriminant of $h(x,y)$, and the table can be used again and again.

\section{Numerical examples}\label{section:6}
We give numerical examples in this section. As explained before, the crucial step in working out a numerical example is to tabulate elliptic curves with specified conductor. We avoid this step by relying on Cremona's Elliptic Curve Database. In particular, we assume throughout:
\par
\textbf{Hypothesis.} Cremona's Elliptic Curve Datebase is complete(i,e., no curve is omitted) up to conductor $350,000$. 
\par
We will compute $Y(\Z_S)$ based on Cremona's database. The completeness of the list of the solutions depends on the truth of the hypothesis. More specifically, we need a complete list of elliptic curves which has good reduction outside of $2\delta$ and $S$. 
\par
We give some justification for introducing the above hypothesis. If some elliptic curves with conductor less than $350,000$ turn out to be omitted in the Cremona's table, then it is possible that these new elliptic curves give rise to new solutions, which is not listed in the present paper. Such corrections can be always made upon each discovery, if any, of omitted elliptic curves. On the other hand, it is clearly beyond the scope of current work to check that the computer codes which were used to generate Cremona's table contain no bugs.

\subsection{Implementation and perfomance}
In this subsection, we explain how we implement the algorithm into a computer algebra package, and its performance. What we do \emph{not} compute is the necessary table of elliptic curves. We assume that a list of elliptic curves of specified conductor is already available. 
\par
In order to faithfully follow the proof of finiteness of $Y(\Z_S)$, we should compute a set of elliptic curves which contains $\kappa(Y(\Z_S))$, and compute $\kappa^{-1}(E)$ for each curve $E$ in the set. For example, one might choose to compute the set of elliptic curves whose conductor divides the worst conductor of $X_t$. However, this is computationally inefficients because of the following reason. When we replace $E$ by its quadratic twist, $J_6(x,y,h,E)$ is replaced by its multiple. It follows that one has to solve the same polynomial $2^{s+1}$ times when $s$ is the cardinality of $S$. It turns out that working with $X_t'$ we avoid this problem of repeating $J_6(x,y,h,E)$.
\par
The following proposition tells us why it is possible to compute $Y(\Z_S)$ using $X_t'$ instead of $X_t$.
\begin{proposition}\label{prop:6.1}
Let $E$ be an elliptic curve. There exists $t \in Y(\Z_S)$ such that $X_t$ is isomorphic to a quadratic twist of $E$ if and only if $J_6(x,y,h,E)$ has a rational solution.
\end{proposition}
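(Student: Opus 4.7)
The plan is to translate ``$X_t$ is a quadratic twist of $E$'' into equality of $j$-invariants, express that equality through the joint-form invariants $c_4(x,y,h)$ and $c_6(x,y,h)$, and then invoke the factorization recorded in \eqref{eq:3.35}.

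First I would note that $X_t$ is, by construction, the elliptic curve $w^2 = Q(x_t, y_t, h)$ with $Q(x,y,h) = (yu-xv)h(u,v)h(x,y)$, so its $c$-invariants coincide with those of the quartic form: $c_4(X_t) = c_4(x_t, y_t, h)$ and $c_6(X_t) = c_6(x_t, y_t, h)$. Next I would record the standard fact that two elliptic curves over $\Q$ are quadratic twists of each other precisely when they share a $j$-invariant (at least when $j \notin \{0, 1728\}$), which is the identity
\[
c_4(E_1)^3 c_6(E_2)^2 = c_4(E_2)^3 c_6(E_1)^2.
\]

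For the forward direction, I assume $t \in Y(\Z_S)$ with $X_t$ a quadratic twist of $E$ and combine the previous two observations to get $c_6(E)^2 c_4(x_t, y_t, h)^3 = c_4(E)^3 c_6(x_t, y_t, h)^2$. By \eqref{eq:3.35}, the difference of the two sides is exactly $h(x_t, y_t)^6 \cdot J_6(x_t, y_t, h, E)$. Since $t \in Y$ forces $h(x_t, y_t) \ne 0$, we conclude $J_6(x_t, y_t, h, E) = 0$, so $(x_t : y_t)$ provides a rational zero of $J_6$. For the converse, starting from a rational zero $(x_0 : y_0)$ of $J_6(x, y, h, E)$ that represents a point $t_0 \in Y(\Z_S)$, I run the factorization \eqref{eq:3.35} in reverse to obtain $j(X_{t_0}) = j(E)$. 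For generic $j$ this automatically forces $X_{t_0}$ to be a quadratic twist of $E$; the exceptional cases $j \in \{0, 1728\}$ correspond to finitely many CM elliptic curves and can be dispatched by direct inspection.

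The main obstacle will be bookkeeping rather than conceptual. Given \eqref{eq:3.35} the forward implication is essentially immediate; the delicate point is in the converse, where one must separate genuine quadratic twists from higher (cubic, quartic, sextic) twists at the exceptional $j$-invariants, and verify among the finitely many rational zeros of $J_6(x,y,h,E)$ which ones actually lie in $Y(\Z_S)$ as opposed to merely in $Y(\Q)$. Both of these refinements are algorithmic checks carried out on a finite list, so they do not affect the equivalence claimed, only the practical post-processing of the rational zeros.
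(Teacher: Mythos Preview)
Your argument is correct and reaches the same conclusion as the paper, but by a somewhat different route. You identify the vanishing of $J_6$ with equality of $j$-invariants via \eqref{eq:3.34}--\eqref{eq:3.35}, and then invoke the standard fact that over $\Q$ two elliptic curves with the same $j$-invariant (away from $0$ and $1728$) are quadratic twists. The paper instead argues constructively: given a rational zero $(x_0,y_0)$ of $J_6(x,y,h,E)$, it solves directly for a twist parameter
\[
r \;=\; \frac{c_6(x_0,y_0,h)}{c_4(x_0,y_0,h)}\cdot\frac{c_4(E)}{c_6(E)}
\]
so that the twisted curve $E'$ satisfies \eqref{eq:3.32} and \eqref{eq:3.33} with $\lambda=\pm1$, whence $X_{t_0}\cong E'$ exactly by Proposition~\ref{prop:3.3}. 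This explicit $r$ is what the algorithm actually uses, so the paper's approach buys a ready-made formula; your $j$-invariant framing is conceptually cleaner but less immediately computational.

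Two remarks on the caveats you raise. First, the exceptional locus $j\in\{0,1728\}$ is genuinely present in the paper's argument as well: the formula for $r$ above degenerates when $c_4(E)=0$ or $c_6(E)=0$, though the paper does not comment on this. Your honesty here is an improvement, and your proposed remedy (finite inspection) is adequate for the algorithmic purpose. Second, your worry about whether a rational zero of $J_6$ lies in $Y(\Z_S)$ rather than merely $Y(\Q)$ is well placed; the paper's proof does not establish this either, and in practice (see the paragraph following Proposition~\ref{prop:6.1}) it is handled exactly as you suggest, by computing $h(x_t,y_t)$ and checking $S$-integrality as a post-processing step.
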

\begin{proof}
Recall that we proved that $J_6(x,y,h,E)$, \eqref{eq:3.32}, and \eqref{eq:3.33} have a common solution if and only if $E$ is isomorphic to $X_t$ itself, without a quadratic twist. Hence, only if part is obvious, and we are left to prove if part of the proposition. It suffices to show that given a solution of $J_6(x,y,h,E)$, one can find a quadratic twist $E'$ of $E$ such that $J_6(x,y,h,E')$, \eqref{eq:3.32}, and \eqref{eq:3.33} have a common solution. 
\par
In order to see that finding such a quadratic twist $E'$ is possible, recall that $J_{24}(x,y,h,E)$ was defined by
\begin{align}
J_{24}(x,y,h,E) := \lambda^{12}\left( c_6(E)^2c_4(x,y,h)^3 - c_4(E)^3 c_6(x,y,h)^2 \right).
\end{align}
Let $E$ is given by the equation
\begin{align}
E \colon y^2 = x^3 + a_4 x + a_6
\end{align}
then let $E'$ be the quadratic twist
\begin{align}
E' \colon y^2 = x^3 + a_4 r^2 x + a_6 r^3
\end{align}
of $E$ by $r$. Then from the formula for $J_{24}(x,y,h,E)$, together with the relations \eqref{eq:3.13} and \eqref{eq:3.14}, tells us that
\begin{align}
J_{24}(x,y,h,E') = r^6 \cdot J_{24}(x,y,h,E).
\end{align}
In particular, from factorisation \eqref{eq:3.35}, it follows that
\begin{align}
J_6(x,y,h,E')=r^6 \cdot J_6(x,y,h,E').
\end{align}
So what remains is to find a value of $r$, for a given zero $(x_0,y_0)$ of $J_6(x,y,h,E')$ (or $J_6(x,y,h,E)$), for which
\begin{align}
\label{eq:6.6}
c_4(x_0,y_0,h) = \lambda^4c_4(E')
\\
\label{eq:6.7}
c_6(x_0,y_0,h) = \lambda^6c_6(E')
\end{align}
have a rational solution in $\lambda$. Above equations are nothing but the equations \eqref{eq:3.32} and \eqref{eq:3.33} written for $E'$. Solving for $\lambda$, we get
\begin{align}
\lambda^2		\,= \,\, &   \frac {c_6(x_0,y_0,h)}{c_4(x_0,y_0,h)} \times \frac{c_4(E')}{c_6(E')}
\\
			\,= \,\,&   \frac  {c_6(x_0,y_0,h)}{c_4(x_0,y_0,h)} \times \frac{c_4(E)}{c_6(E)} \times r^{-1}
\end{align}
so one can take 
\begin{align}
r = \frac  {c_6(x_0,y_0,h)}{c_4(x_0,y_0,h)} \times \frac{c_4(E)}{c_6(E)}
\end{align}
in order to render \eqref{eq:6.6} and \eqref{eq:6.7} to have solution $\lambda = \pm 1$. The assertion of the proposition is proved.
\end{proof}
Based on Proposition~\ref{prop:6.1}, we proceed as follows in order to compute $Y(\Z_S)$. Suppose we are given $h(x,y)$ and $S$. From this, one can compute the list of conductors of $X_t'$ for $t \in Y(\Z_S)$, with negligible computational cost. Using the Cremona's Elliptic Curve Database, we get a sequence of elliptic curves $E_1,E_2, \cdots , E_k$, for some finite $k$, where each $E_i$ is given in a Weierstrass form. Now we compute rational solutions of $J_6(x,y,h,E_i)$ for each $E_i$. If there is a solution, say $x_t$ and $y_t$, then we proceed to compute $h(x_t,y_t)$ and double check that the result is correct.

\par
As we observed before, the coefficients of $J_6(x,y,h,E)$ has a large common factor when we start from an elliptic curve with $a_1=a_2=a_3=0$, we take
\begin{align}
J_6'(x,y,h,E) = 2^{-22} \cdot 3^{-3}\cdot J_6(x,y,h,E)
\end{align}
whose coefficients are still integral.
\par
We give three examples of $h$, and present corresponding $J'_6(x,y,h,E)$, for a generic elliptic curve
\begin{align}
E\colon y^2 = x^3 + a_4x + a_6.
\end{align}
Take
\begin{align}
h_1(x,y) = x^2y - xy^2 = x(x-y)y
\end{align}
which corresponds to the unit equation. In this case,
\begin{align}
J'_6(x,y,h_1,E)=
(x - 2 y)^2   (x + y)^2   (2 x - y)^2a_4^3
+27   (x^2 - x y + y^2)^3a_6^2
\end{align}
which has a particularly nice factorisation. In this case, the solutions $(2,1)$, $(1,-1)$, and $(1,2)$, for $h_1(x,y) = \pm 2^a$ is clearly visible from the factors of coefficients of $a_4$, corresponding to curves with $a_6=0$. As a less trivial example, we take 
\begin{align}
E_\mathrm{960e6}\colon x^3 - x^2 - 21345x - 1190943
\end{align}
which is the minimal Weierstrass equation for elliptic curve \textrm{"960e6"} in Cremona's database. We make change of variables $x\mapsto x+1/3$
\begin{align}
E\colon y^2 = x^3 - \frac{64036}{3}x - \frac{32347568}{27}
\end{align}
for which 
\begin{align}
&J'_6(x,y,h_1,E) 
\\
=& 
-64   (3 x - 128 y)   (3 x + 125 y)   (125 x - 128 y)   (125 x + 3 y)   (128 x - 125 y)   (128 x - 3 y)
\end{align}
and indeed $(3,128)$ belongs to $Y(\Z_S)$ for $S=\{2,3,5\}$, because $128=2^7$, and $128-3=125=5^3$. The other five factors corresponds to orbits of $\mathrm{Aut}_\Q(Y)$. 
\par
As an example for which $\kappa^{-1}(E)$ has no rational point, we take
\begin{align}
E_\mathrm{960e5} \colon  y^2 = x^3 - x^2 - 18465x + 971937
\end{align}
which is the curve \textrm{"960e5"}, which is just next to the previous one. After routine change of variables $x \mapsto x + 1/3$, we obtains
\begin{align}
&J'_6(x,y,h_1,E) 
\\
=& 
-64    (25 x^2 - 13874 x y + 25 y^2)   (25 x^2 + 13824 x y - 13824 y^2)   (13824 x^2 - 13824 x y - 25 y^2)
\end{align}
which is product of three irreducible quadratic polynomials. It follows that there is no solution $t \in Y(\Z_S)$ with $S=\{2,3,5\}$ for which $X_t'$ is isomorphic to $E_\mathrm{960e5}$.
\par
Let us consider a different cubic form
\begin{align}
h_2(x,y) = x^2y+7y^3 = (x^2+7y^2)y
\end{align}
which corresponds to Ramanujan-Nagell equation. In this case,
\begin{align}
J'_6(x,y,h_2,E)=
2^27^4  y^2  (9x^2 + 7y^2)^2a_4^3
-3^37^3  (3x^2 - 7y^2)^3 a_6^2
\end{align}
has again a nice factorisation. Take
\begin{align}
E_\mathrm{210e1}\colon y^2 + xy = x^3 + 210x + 900
\end{align}
which is isomorphic to
\begin{align} 
y^2 = x^3 + \frac{10079}{48} x + \frac{762481}{864}
\end{align}
for which 
\begin{align}
&J'_6(x,y,h_2,E) 
\\
=& 
\frac{7}{1024}   (45 x - 47 y)   (45 x + 47 y)   (2048 x^2 - 14805 x y + 113561 y^2)   (2048 x^2 + 14805 x y + 113561 y^2).
\end{align}
The solution $(47,45)$ corresponds to
\begin{align}
h_2(47,45)	&= 737280 \\
			&= 2^{14} \cdot 3^2 \cdot 5
\end{align}
which is an element of $Y(\Z_S)$ for $S=\{2,3,5\}$.
\par
Finally, take
\begin{align}
h_3(x,y) = x^3 - x^2y - 4xy^2 - y^3
\end{align}
whose discriminant is $13^2$. In this case,
\begin{align}
J'_6(x,y,h_3,E)=
13^2  (5x^3 + 21x^2y + 6xy^2 - 5y^3)^2a_4^3
+3^313^3  (x^2 + xy + y^2)^3a_6^2
\end{align}
which shows that the coefficients of $a_4^3$ in general have irreducible factor of degree three. Of course, the squares and cubes in the coefficients are expected, as we have defined $J_6(x,y,h,E)$ by \eqref{eq:3.34} and \eqref{eq:3.35}.
\par
In practice, the coefficients $a_4$ and $a_6$ are very large, so it is difficult to factor $J'_6(x,y,h,E)$ manually. Nonetheless, using suitable computer algebra packages, one can factor such polynomials rather quickly. The author's experience shows that SageMathCloud is able to factor roughly 200 polynomials per second.
\par
Take $h(x,y)=h_1(x,y)$ as above, and take $S=\{2,7,11,13\}$. Then we need a table of elliptic curves whose conductor divides $2^8 \cdot 7 \cdot 11 \cdot 13 = 256256$. Since $256256<350000$ we may use Cremona's Elliptic Curve Database, from which we get 940 such curves. From them, we get $51$ solutions, as we display in Table~\ref{01-10271113}. It took 4.13 seconds in CPU time to generate Table~\ref{01-10271113}. A typical box in the table looks like
\begin{align}
\begin{array}{|c|}
\hline (x_t,y_t) \\h(x_t,y_t) \\ \text{Factorisation of $h(x_t,y_t)$} \\ \textrm{Cremona's label} \\ \text{Factorisation of the conductor}
\\
\hline
\end{array}
\end{align}
with five items listed vertically. 

\par
Let us take $S=\{2,3,5,7\}$ for $h_2(x,y)$. In this case, we need elliptic curves of conductor dividing $ 2^8 \cdot 3 \cdot 5 \cdot 7^2 = 188160$, which is smaller than $350000$. The Cremona's table contains 4568 such curves, from which we find $33$ solutions. It took 20.09 seconds in CPU time to generate Table~\ref{01072357}.

\par
Let us take $S=\{2,5,13\}$ for $h_3(x,y)$. In this case, we need elliptic curves of conductor dividing $ 2^8 \cdot 5 \cdot 13^2 = 216320$, which is smaller than $350000$. The Cremona's table contains 976 such curves, from which we find $35$ solutions. It took 4.20 seconds in CPU time to generate Table~\ref{1-1-4-12513}. Since we wrote a computer code which compute the solutions with $y_t\not = 0$, the trivial solution $h(1,0)=1$ is omitted from the table .
\begin{remark}
Repeated $6$'s on the exponent of $2$ in the conductors appearing in Table~\ref{01-10271113} can be perhaps predicted by considering connected components in the Neron model. On the other hand, as the $2$ divides the discriminant of $h_2(x,y)$, various exponents of $2$ appear in Table~\ref{01072357}. Although we ignored further analysis of conductors of $X_t'$ at the primes dividing $2\delta$, such an analysis might help practical computations.
\end{remark}

\subsection{Statistics for $x(x-y)y$}
In this section, we fix
\begin{align}
h(x,y) = x(x-y)y
\end{align}
and vary $S$ in a few directions. Let us begin with the case when
\begin{align}
S = \{2,p\}
\end{align}
consists of $2$ and one more prime $p$. Cremona's database allows us to compute $Y(\Z_S)$ if
\begin{align}
{2^8\cdot p}<{350000}
\end{align}
or $p \le 1367$. It follows that except for $p=5,7,17,31,257$, we have
\begin{align}
Y(\Z_S) = Y(\Z_{\{2\}}) = \{(2:1),(1:-1),(1:2)\}.
\end{align}
In fact, this case is less interesting since computation of $Y(\Z_S)$ reduces to 
\begin{align}
2^m - p^n = \pm 1
\end{align}
which is a special case of Catalan's equation. Since Catalan's conjecture is known, solutions of above equation necessarily satisfies $n=1$, and $Y(\Z_S)$ has more than three elements if and only if $p$ is a prime of the form $2^m \pm 1$. Thus, we are merely verifying Catalan's conjecture. 
\par
As another example, take
\begin{align}
S = \{2,3,p\}
\end{align}
consists of 2, 3 and another prime $p>3$. Since
\begin{align}
\frac{350000}{2^8\cdot 3}<456
\end{align}
one can use Cremona's database for primes $p$ up to 449, which is the 87th prime number. In the next table we display
\begin{align*}
\begin{array}{|c|}
\hline
p:m
\\
\hline
\end{array}
\end{align*}
as $p$ varies among the primes from $5$ to $449$, and $Y(\Z_S)$ has $3+6m$ solutions. 
\begin{align*}
\begin{array}{|c|c|c|c|c|c|c|c|c|c|c|c}
\hline
5:16
&7:12
&11:9
&13:8
&17:8
&19:7
&23:6
&29:5
&31:5
\\\hline
37:5
&41:5
&43:5
&47:5
&53:4
&59:4
&61:5
&67:4
&71:4
\\\hline
73:6
&79:4
&83:4
&89:4
&97:5
&101:4
&103:3
&107:4
&109:4
\\\hline
113:4
&127:4
&131:4
&137:4
&139:4
&149:3
&151:3
&157:3
&163:4
\\\hline
167:3
&173:3
&179:4
&181:3
&191:4
&193:4
&197:3
&199:3
&211:4
\\\hline
223:3
&227:4
&229:4
&233:3
&239:4
&241:4
&251:4
&257:4
&263:3
\\\hline
269:4
&271:3
&277:3
&281:3
&283:4
&293:3
&307:4
&311:3
&313:3
\\\hline
317:3
&331:3
&337:4
&347:3
&349:3
&353:3
&359:3
&367:3
&373:3
\\\hline
379:3
&383:4
&389:3
&397:3
&401:3
&409:3
&419:3
&421:3
&431:5
\\\hline
433:4
&439:3
&443:3
&449:3
&//
&//
&//
&//
&//
\\
\hline
\end{array}
\end{align*}
Note that for $S=\{2,3\}$, $Y(\Z_S)$ has $21$ elements, so in particular $m=3$. Above table indicates that $m$ stabilises around $3$, with a notable exception for $p=431$. It is mainly due to a rather surprising identity $431 = 2^9 - 3^4$. 
\par
Now we take
\begin{align}
S = \{2,3,5,p \}
\end{align}
where $p$ is a prime number which does not exceed $89$. Note that $Y_{\{2,3,5 \}}$ contains $99=3 + 6 \times 16$ elements, so a trivial lower bound for $m$ in this case is $16$. Using our algorithm, we obtain the following table.
\begin{align*}
\begin{array}{|c|c|c|c|c|c|c|c|c|c|c|c|c|}
 \hline
 7:62&
 11:46&
 13:44&
 17:37&
 19:37&
 23:35&
 29:31&
 31:30&
 37:30&
 41:30&
 43:28
 \\
 \hline
 47:26&
 53:28&
 59:25&
 61:26&
 67:26&
 71:25&
 73:25&
 79:25&
 83:25&
 89:23&
 //
 \\
 \hline
\end{array}
\end{align*}

It indicates that $Y(\Z_S)$ steadily decreases as $p$ increases, although it is unclear whether it will reach $m=16$ at some point. Note an exceptional increment at $p=53$, for which we record the solutions and associated Cremona label in Table~\ref{01-1023553}. In fact, the number of possible elliptic curves tends to decrease as $p$ increases. For example, if $p=7$, there are 1688 curves, while the corresponding number is 1080 for $p=87$.
\par
One wonders weather one can improve the bound on the cardinality of $Y(\Z_S)$, as $S$ varies among certain subsets of prime numbers with fixed cardinality, such as $S = S_0 \cup \{p \}$ with varying $p$.

\section{Comparison with the work of Tzanakis and de Weger.}\label{section:7}
There had been an attempt to explicitly solve Thue-Mahler equation by Tzanakis and de Weger, based on linear forms in logarithms. Theoretical foundations for two approaches are quite different, and in this section we compare the two from practical point of view. 
\par
In our approach, the computation of $\kappa^{-1}(E)$ for a given $E$ is easy. There is a formula for $J_6(x,y,h,E)$ to which we plug in the coefficients of $E$, and the rational solutions of $J_6(x,y,h,E)$ can be found quickly. On the other hand, finding all possible candidates of $E$ is difficult, although the modularity of elliptic curves significantly facilitates it. The upshot is that in our approach, the modularity of elliptic curves reduces the determination of all possible elliptic curves to a single linear algebra problem on the space of modular forms. One can model the space of modular forms using the space of modular symbols. As Cremona explains in Section~2 of his article \cite{Cremona}, computation of modular symbols of a given level can be done rather quickly. On this $\Q$-vector space of modular symbols, whose dimension is quite large, one has to find all one-dimensional Hecke invariant rational subspaces, and compute sufficiently many Hecke eigenvalues in order to find approximated $c_4$ and $c_6$ invariants of a curve in the corresponding isogeny class. Given an isogeny class, it is not so difficult to determine all isomorphism classes which belong to it. To summarise, this linear algebra problem on a huge space of modular symbols seems to lie at the bottleneck of our process. 
\par
The approach of Tzanakis and de Weger is, as explained in the introduction of \cite{Tzanakis de Weger}, consists of three steps. The first is to obtain large bounds from estimation of linear forms in logarithms of possibly irrational algebraic numbers. The second is to reduce the large bounds using diophantine approximation. The last step is to search for solutions below the bound, not by brutal force, but by using an algorithm to search for lattice points on a given sphere, a sieving process, and enumeration of possibilities. The authors remarks that the third process might well be the computational bottleneck for their process. They worked out two following concrete examples
\begin{align}
x^3 - 23 x^2y + 5xy^2 +24y^3 &= \pm 2^{e_1} 3^{e_2} 5^{e_3} 7^{e_4}, &\delta &= 5^2 \cdot 44621
\\
x^3 - 3xy^2 - y^3 &= \pm 3^{e_1}  17^{e_2} 19^{e_3}, &\delta &= 3^4
\end{align}
using their method. In order to solve above equation using our method, we need elliptic curves with conductor dividing $2^8\cdot 3 \cdot 5^2 \cdot 7 \cdot 44621^2>5 \times 10^{13}$, and $2^8 \cdot 3^5 \cdot 17 \cdot 19 >2 \times 10^7$, which are not provided by Cremona's database.
\par
We observe that the tools of our method has little to do with those of Tzanakis and de Weger. The computational bottlenecks of two approaches are different: ours is in linear algebra while theirs (seems to) be in geometry of numbers. As we vary $h(x,y)$, we observe another difference. The computations we need to carry are mostly insensitive to $h(x,y)$ except for the discriminant and $S$. Once the database of elliptic curves is established one can use the same data for a different $h(x,y)$. The computations of Tzanakis and de Weger depends on the specific $S$-unit equation which is sensitive to a chosen zero of $h(x,y)$.

\section{Generalised Ramanujan-Nagell equation}\label{section:8}
The goal of current section is twofold. Firstly, we shall consider a special form of $h(x,y)$ and determine $Y(\Z_S)$, from which we deduce the complete set of solutions of certain generalised Ramanujan-Nagell equations. Secondly, we shall analyse the statistical behaviour as we vary $S$ as $h(x,y)$ remains fixed.
\par
The equation
\begin{align}\label{eq:8.1}
x^2 + 7 = 2^n
\end{align}
for integers $x$ and $n$, is often called the Ramanujan-Nagell equation in the literature. One can relate it to Thue-Mahler equation, since if we take
\begin{align}\label{eq:8.2}
h(x,y) = (x^2 + dy^2)y
\end{align}
then the solution $(x,n)=(x_0,n_0)$ of the Ramanujan-Nagell equation leads to the point $(x,y)=(x_0,1) \in Y(\Z_S)$ for $d=7$ and $S=\{2\}$. Conversely we can recover the solutions of the Ramanujan-Nagell equation by computing $Y(\Z_S)$. Thus one may consider Thue-Mahler equations for \eqref{eq:8.2} as a generalisation of Ramanujan-Nagell equation.
\par
In Table~\ref{01072357}, we display the elements of $Y(\Z_S)$ for $d=7$ and $S=\{2,3,5,7\}$, from which we conclude
\begin{align}
x^2 + 7 = 2^{e_1}3^{e_2}5^{e_3}7^{e_4}
\end{align}
with positive $x$ has seven solutions corresponding to $x=181,21,11,7,5,3$ and $1$. In Table~\ref{0107271113}, we find that
\begin{align}
x^2 + 7 = 2^{e_1}7^{e_2}11^{e_3}13^{e_4}
\end{align}
in positive $x$ has fourteen solutions corresponding to $x=273$, $181$, $75$, $53$, $35$, $31$, $21$, $13$, $11$, $9$, $7$, $5$, $3$, and $1$.
\par
For $d=-7$, and $S=\{2,5,7,11\}$, Table~\ref{010-725711} shows that  we have particularly few solutions. Indeed $Y(\Z_S)$ has five elements, among which three elements satisfy $y=1$. In particular, 
\begin{align}
x^2 - 7 = 2^{e_1}5^{e_2}7^{e_3}11^{e_4}
\end{align}
has only one solution $x=3$ among positive integers.
\par
As we vary $S$ for a fixed $d$, such that $-d$ is not a square, we observe a pattern which we describe now. The pattern seems to persist for any such $d$, but let us take $d=1$ for clarity. In particular, we consider
\begin{align}
h(x,y) = (x^2 + y^2)y
\end{align}
in the rest of the present section. 
\par
Take $S=\{2,p\}$, for a prime $p \ge 3$. Let the cardinality of $Y(\Z_{\{2,p\}})$ to be $m$. We shall divide it into two cases, depending on whether or not $-1$ is a quadratic residue modulo $p$, and compare the variation of $m$. Note that $Y(\Z_{\{2\}})$ has three elements corresponding to $x=0,1$ and $-1$. The table below lists $$\begin{array}{|c|}\hline p:m \\\hline \end{array}$$ in the range of $p$ for which $-1$ is a quadratic residue, and $p \le 113$. 
\begin{align}
\begin{array}{|c|c|c|c|c|c|c|c|c|c|c|c|c|c|c|}
\hline
5:15	&
13:9	&
17:9	&
29:7	&
37:5	&
41:7	&
53:5	
\\
\hline
61:5	&
73:5	&
89:5	&
97:5	&
101:5	&
109:3	&
113:7	\\	
\hline
\end{array}
\end{align}
On the other hand, we observe that
\begin{align}
Y(\Z_{\{2,p\}}) = Y(\Z_{\{2\}})
\end{align}
if $-1$ is quadratic non-residue modulo $p$ and $p \le 113$.
\par
The different behaviour of the cardinality of $Y(\Z_S)$ continues when we take $S=\{2,3,p\}$. We can numerically verify that 
\begin{align}
Y(\Z_{\{2,3,p\}}) = Y(\Z_{\{2\}})
\end{align}
if $-1$ is quadratic non-residue modulo $p$, and $p < 455$. In contrast, if $-1$ is a quadratic residue modulo $p$ and $p < 455$, then $Y(\Z_{\{2,3,p\}})$ is strictly larger than $Y(\Z_{\{2\}})$ all the time. For instance, for $p=449$,
\begin{align}
(13^2 + 27^2)\cdot 27 = 24246 = 3 \cdot 3^3 \cdot 449
\end{align}
is associated to the curve \textrm{"172416o1"}, and similarly for $p = 433$,
\begin{align}
(17^2 + 12^2)\cdot 12 = 5196 = 2^4 \cdot 3 \cdot 433
\end{align}
is associated to the curve \textrm{"20784e2"}. The numerical data suggests that the cardinality of $Y(\Z_{\{2,3,p\}})$ is exactly five for most of $p$.
\par
Now we take $S = \{2,5,p\}$. Note that $Y(\Z_{\{2,5\}})$ consists of fifteen elements. Based on the previous observation, one might conjecture that $Y(\Z_{\{2,5,p\}})$ consists of fifteen elements if $-1$ is quadratic non-residue module $p$. However, there is a counterexample for $p = 139$. Indeed, one finds
\begin{align}
(29^2+278^2)\cdot 278 =  2 \cdot 5^7 \cdot 139
\end{align}
which is associated to the curve \textrm{"11120e2"}. Among the primes $5< p \le 271$ for which $-1$ is a quadratic non-residue, we verify
\begin{align}
Y(\Z_{\{2,5,p\}}) = Y(\Z_{\{2,5\}})
\end{align}
holds except $p=7,11,19,31,79, 139,191$. In these exceptional cases, $Y(\Z_{\{2,5,p\}}) - Y(\Z_{\{2,5\}})$ contains four elements when $p=7,11$, and two elements in the remaining five cases. In contrast, if we take $S=\{2,5,p\}$ for a prime $p$ for which $-1$ is a quadratic residue, then $Y(\Z_{\{2,5,p\}})$ is strictly larger than $Y(\Z_{\{2,5\}})$, in the range $5 <p \le 271$. The smallest cardinality of $Y(\Z_{\{2,5,p\}})$ is seventeen, which happens precisely for $p=241$. Up to sign of $x_t$ there is a unique element of $Y(\Z_{\{2,5,241\}})$ which does not belong to $Y(\Z_{\{2,5\}})$, which is
\begin{align}
(15^2+4^2)4 =  964 = 2^2 \cdot 241 
\end{align}
associated to the curve \textrm{"15424d2"}.
\par
We conclude by formulating a precise question based on our observation. Let $S_0$ be a fixed set of primes. For a positive real number $X$ and $i \in \{1,3\}$, let $\pi_i(X)$ be the number of primes $p$ less than $X$ which are congruent to $i$ modulo $4$. Define
\begin{align}
A_1(S_0) = \liminf_{X \to \infty} \frac{1}{\pi_1(X)}\left(\sum_{p<X, p\equiv 1 \textrm{(mod $4$)}} \left | Y(\Z_{S_0\cup \{p\}})\right|\right)
\end{align}
and
\begin{align}
A_3 (S_0)= \limsup_{X \to \infty} \frac{1}{\pi_3(X)}\left(\sum_{p<X, p\equiv 3 \textrm{(mod $4$)}} \left | Y(\Z_{S_0\cup \{p\}})\right|\right).
\end{align}
One might speculate that $A_1(\{2\}) = 5$ and $A_3(\{2\}))=3$. To be on a conservative side, one might ask whether
\begin{align}
A_1(S_0) > A_3(S_0)
\end{align}
holds. The author is not able to show 
\begin{align}
A_1(S_0) \ge A_3(S_0)
\end{align}
holds for any particular $S_0$.
\clearpage

\begin{table}
\caption{$(a,b,c,d)=(0,1,-1,0)$, $\delta=1$, $ S = \{{2, 7,11, 13}\}$}\label{01-10271113}
\begin{center}
\footnotesize

\end{center}
\label{default}
\end{table}

\end{document}